\declaretheorem[numberwithin=section]{theorem}
\declaretheorem[sibling=theorem]{lemma}
\declaretheorem[sibling=theorem]{remark}
\newcommand{\N}{\mathbb{N}}
\renewcommand{\epsilon}{\ensuremath{\varepsilon}}
\begin{document}

\title{Colorability saturation games}
\author{Ralph Keusch \vspace{0.2cm}\\ \small Institute of Theoretical Computer Science\\
\small ETH Zurich, 8092 Zurich, Switzerland \vspace{0.2cm}\\ \small \texttt{rkeusch@inf.ethz.ch}}
\maketitle

\begin{abstract}
We consider the following two-player game: Maxi and Mini start with the empty graph on $n$ vertices and take turns, always adding one additional edge to the graph such that the chromatic number of the current graph is at most $k$, where $k \in \N$ is a given parameter. The game is over when the graph is saturated and no further edge can be inserted. Maxi wants to maximize the length of the game whereas Mini wants to minimize it. The score $s(n,\chi_{>k})$ denotes the number of edges in the final graph, given that both players followed an optimal strategy.

This colorability game belongs to the family of \emph{saturation games} that are known for providing beautiful and challenging problems, despite being defined via simple combinatorial rules. The analysis of colorability saturation games has been initiated recently by Hefetz, Krivelevich, Naor, and Stojakovi{\'c} \cite{hefetz2016saturation}. In this paper, we improve their results by providing almost matching lower and upper bounds on the score of the game for arbitrary choices of $k$ and $n>k$. In addition, we study the specific game with $k=4$ in more details and prove that its score is $n^2/3+O(n)$. 
\end{abstract}

\section{Introduction}\label{sec:saturationinto}

One of the most classic problems in extremal graph theory is to determine how many edges a graph on $n$ vertices can have without fulfilling a given monotone property $\mathcal{P}$. In this context, we say that a graph $G$ is \emph{saturated} with respect to $\mathcal{P}$ if $G$ does not satisfy $\mathcal{P}$, but adding any additional edge $e \in \tbinom{[n]}{2} \setminus E$ to the graph results in $G \cup \{e\}$ satisfying $\mathcal{P}$. The Tur\'{a}n number $ex(n,\mathcal{P})$ is then the \emph{maximal} number of edges that a $\mathcal{P}$-saturated graph on $n$ vertices can have. On the other hand, the saturation number $sat(n,\mathcal{P})$ denotes the \emph{minimal} number of edges that a graph $G$ on $n$ vertices can have while being saturated w.r.t.\ $\mathcal{P}$. For a general survey on saturation numbers see \cite{faudree2011survey}.

Saturation games are a class of combinatorial games that are closely related to saturated graphs. For a given monotone graph property $\mathcal{P}$, the saturation game is played as follows: two players Maxi and Mini start with the empty graph on $n$ vertices. They take turns, always extending the current graph $G$ with some additional edge $e$ such that $G \cup \{e\}$ does not satisfy $\mathcal{P}$. At some point, every free edge is forbidden, i.e., the obtained graph $G_{end}$ is saturated w.r.t.\ $\mathcal{P}$, and the game stops. Mini aims to minimize the number of edges in $G_{end}$ (that is, Mini wants that the game is over as soon as possible), while Maxi's goal is to maximize the number of edges in $G_{end}$. The score of the game, denoted by $s(n,\mathcal{P})$, is the total number of edges in $G_{end}$ when both players apply optimal strategies. When analyzing saturation games, we aim at finding significant lower and upper bounds on the score or ideally determining the score exactly. 

In general, the score can depend on the identity of the first player. However, in this paper we don't specify who starts as all statements hold for both cases. Clearly, for every monotone property $\mathcal{P}$ we have
\[sat(n,\mathcal{P}) \le s(n,\mathcal{P}) \le ex(n,\mathcal{P}),\]
which connects saturation games to the well-studied saturation and Tur\'{a}n numbers of graphs. We see that if the saturation and the Tur\'{a}n numbers of the studied property are the same (for example if $\mathcal{P}=$``being non-planar'' or $\mathcal{P}=$``having independence number at most $k$''), the score $s(n,\mathcal{P})$ is directly determined by the two numbers.

In the last quarter-century, it turned out that analyzing saturation games is both interesting and challenging. The two players not only want to follow their own strategy and play against the adversary \emph{at the same game}, but moreover the two players also have \emph{opposing} goals, thus making the game both intriguing and intricate. Note that the goal of a player is to create a certain graph structure which ensures a short (resp.\ long) game. The more extreme this structure is, the easier the opponent can play against it. But the weaker the structure is, the less our player gains. Often, it is much easier to destroy the opponent's structure than to create the own, desired structure. Hence a good strategy should (a) be resistant against attacks, and (b) make sure that $G_{end}$ will be sufficiently sparse (resp.\ dense). Therefore, finding optimal or almost optimal strategies for saturation games can be surprisingly hard and often requires tedious case distinctions. This is why the asymptotic value of the score is only known for a few particular games.

Let us briefly summarize the most important examples and existing results. Let $\mathcal{C}_k$ be the property of being $k$-connected and spanning, and let $\mathcal{PM}$ be the property of possessing a perfect matching. Carraher, Kinnersley, Reiniger, and West proved $s(n,\mathcal{C}_1)=\tbinom{n-2}{2}+1$ for the connectivity game \cite{carraher2017game} with $n \ge 6$. Hefetz et al.\ \cite{hefetz2016saturation} generalized this result and asserted the bound $s(n,\mathcal{C}_k) \ge \tbinom{n}{2}-5kn^{3/2}$. In the same paper, they proved $s(n,\mathcal{PM})\ge \tbinom{n-4}{2}$ and further results on matching games. Additional saturation games have been studied by Lee and Riet \cite{lee2015fsaturation}, as well as variants on directed graphs \cite{lee2014new}. 

The most famous example of saturation games is the triangle-free game. Here, the considered monotone property is $\mathcal{K}_3$, i.e., containing a triangle as a subgraph. It is well-known that $ex(n,\mathcal{K}_3) = \lfloor n^2/4 \rfloor$ and $sat(n,\mathcal{K}_3)=n-1$ (see \cite{erdos1964problem}, e.g.). In \cite{fueredi1991hajnal} and \cite{seress1992hajnal}, F\"{u}redi, Reimer, and Seress proved a lower bound of $(\frac12 +o(1))n \log_2 n$ on the score of this game, and cite Erd\H{o}s who has given an upper bound of $n^2/5$ in personal communication. However, the proof of this upper bound is lost and could not be retrieved until today. The currently best-known upper bound is $\frac{26}{121}n^2+o(n^2)$ by Bir\'{o}, Horn and Wildstrom \cite{biro2016upper}, a small improvement compared to the trivial upper bound. Closing the large gap between $\Omega(n \log n)$ and $O(n^2)$ is still a challenging open problem, and the current understanding of this game is rather poor.

We now turn to the topic of this paper and focus on the property
\[\chi_{>k}=\text{``having chromatic number at least }k+1\text{''}.\] 
In other words, when playing edges, Mini and Maxi are forced to keep the current graph $k$-colorable. Note that every graph that is saturated w.r.t.\ $\chi_{>k}$ is a complete $k$-partite graph. Hence, the game is about deciding the number of edges of this final, $k$-partite graph $G_{end}$. Clearly, the total number of edges in a complete $k$-partite graph $G$ with partition sizes $n_1, \ldots, n_k$ is $\binom{n}{2}-\sum_{i=1}^k \binom{n_i}{2}$. Then the well-known Tur\'{a}n number 
\[ex(n,\chi_{>k})=(1-1/k+o(1))\binom{n}{2}\] 
and the saturation number
\[sat(n,\chi_{>k}) = (k-1)(n-1)-\binom{k-1}{2}\]
provide us first bounds on the score of the colorability saturation game.

Let us start by describing the case $k=2$ where Maxi and Mini are forced to keep the graph bipartite. If $n$ is even, it is not difficult to observe that Maxi can play such that after each of her moves, every component of the current bipartite graph is balanced, except the isolated vertices, and $G_{end}$ will be perfectly balanced. For the general case, this argument implies
\[s(n,\chi_{>2})=ex(n,\chi_{>2})=\Big\lfloor \frac{n}{4}\Big\rfloor.\]
A formal proof is provided in \cite{carraher2017game}. We see that Mini has no power in this particular saturation game. However, things get more interesting and involved as soon as $k>2$. Hefetz, Krivelevich, Naor, and Stojakovi{\'c} \cite{hefetz2016saturation} proved \[s(n,\chi_{>3}) \le \frac{21}{64}n^2+O(n),\] revealing that Mini now has some influence on the game process. Furthermore, in the same paper they introduced a randomized strategy for Maxi that leads to a general lower bound on the score of colorability saturation games.

\begin{theorem}[Theorem~1.5 in \cite{hefetz2016saturation}] \label{thm:saturationhefetz} There exists a constant $C>0$ such that for every $k \in \N$ and every $n \in \N$ that is sufficiently large compared to $k$ it holds
\[s(n,\chi_{>k})\ge\binom{n}{2}\Big(1-\frac{C \log k}{k}\Big).\]
\end{theorem}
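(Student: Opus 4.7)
The plan is to exhibit a \emph{randomized} strategy for Maxi whose expected score attains the desired bound; an averaging argument then supplies a deterministic strategy achieving the claim. My candidate strategy: before the game starts, Maxi samples an independent, uniformly random coloring $\chi\colon V\to[k]$; on each of her turns she plays a legal edge $uv$ (one whose insertion keeps the graph $k$-colorable) with $\chi(u)\neq\chi(v)$ whenever such an edge exists, and otherwise plays an arbitrary legal edge. Intuitively, Maxi tries to force the final partition $V_1,\dots,V_k$ of $G_{end}$ to align with the (balanced) random coloring $\chi$.

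Since $G_{end}$ is complete $k$-partite with part sizes $n_1,\dots,n_k$,
\[
s(n,\chi_{>k})\;=\;\binom{n}{2}-\sum_i\binom{n_i}{2}\;\geq\;\binom{n}{2}-\tfrac{n}{2}\cdot\max_i n_i,
\]
so it suffices to prove $\mathbb{E}[\max_i n_i]\leq C_0\,n\log k/k$ under Maxi's strategy against any Mini strategy. Setting $t:=\lceil C_0 n\log k/k\rceil$, I would union-bound over all $\binom{n}{t}\leq (en/t)^t$ candidate sets $S\subseteq V$ of size $t$, and for each fixed $S$ bound the probability (over $\chi$) that $S$ lies entirely inside one final part. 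If $S$ sits in a single part then no edge within $S$ is ever played, and Maxi's strategy guarantees that every $\chi$-bichromatic pair in $S$ must have become \emph{forbidden} during the game (meaning its insertion would have forced chromatic number $>k$), since otherwise Maxi would have played it. A Chernoff estimate produces $\Omega(t^2)$ such bichromatic pairs inside $S$ except with probability $\sim k^{1-t}$, reducing the task to bounding how many pairs within $S$ Mini can cause to become forbidden.

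The main obstacle is precisely this last step. A single move by Mini can, via transitivity through the existing graph, simultaneously forbid many non-edges, so one cannot naively charge one Mini move per forbidden pair in $S$. I would handle this with an amortized / potential-function argument: roughly, forbidding many pairs inside $S$ requires Mini to build a substructure that costs her comparably many edges involving vertices outside $S$, so the total number of pairs within $S$ Mini can forbid is $o(t^2)$. The $\log k$ factor in the theorem emerges naturally from balancing the union-bound cost $(en/t)^t$ against the $\sim k^{-t}$ Chernoff tail, forcing $t=\Theta(n\log k/k)$. Once this forbidding bound is in hand, the theorem follows by combining the reduction above with the Chernoff estimate and the union bound.
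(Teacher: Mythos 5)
First, a point of reference: this paper does not prove Theorem~\ref{thm:saturationhefetz} at all --- it is quoted from \cite{hefetz2016saturation} as background, and the paper's own contribution (Theorem~\ref{thm:saturationlower}, proved via the clique-cover strategy of Lemma~\ref{lem:cliques}) strictly supersedes it for $k\ge 3$. So the cleanest route to the stated bound within this paper is simply to invoke Theorem~\ref{thm:saturationlower} and absorb small $k$ into the constant $C$; a from-scratch randomized argument is not needed.

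Judged on its own terms, your proposal has a genuine gap at exactly the step you flag as ``the main obstacle,'' and the surrounding reasoning does not reduce the theorem to anything tractable. The assertion that every $\chi$-bichromatic pair inside $S$ ``must have become forbidden, since otherwise Maxi would have played it'' is not supported by your strategy: Maxi plays one bichromatic edge per turn, chosen from the whole graph, so she never prioritizes the pairs inside any particular $t$-set $S$ (and cannot do so for all $\binom{n}{t}$ sets simultaneously). The fact that every pair inside a final color class is forbidden is an automatic consequence of saturation and holds for \emph{any} Maxi strategy, so at this point your strategy has contributed nothing to the analysis. The entire theorem is then carried by the unproved claim that Mini can forbid only $o(t^2)$ pairs inside a $t$-set; but stated this way the claim is false, since the final graph always has color classes of size $\Omega(n/k)=\Omega(t/\log k)$, inside each of which $\Omega(t^2/\log^2 k)$ pairs are forbidden, and nothing in the argument prevents one of these classes from containing a mostly-bichromatic $S$. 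Some refined version of the claim (tracking \emph{when} pairs become forbidden relative to Maxi's opportunities to play them) would be the actual content of the proof, and it is not supplied. A secondary issue: the claimed origin of the $\log k$ factor does not check out --- balancing $\binom{n}{t}\le(en/t)^t$ against a tail of order $k^{-\Omega(t)}$ only requires $t=\Omega(n/k^{c})$ for a constant $c$, with no logarithm; in the actual argument of \cite{hefetz2016saturation} the $\log k$ arises from the combinatorial structure Maxi can force (compare: the deterministic strategy of Lemma~\ref{lem:cliques} builds cliques of size $\lceil k/2\rceil$ and removes the $\log k$ entirely), not from the union bound.
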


Unfortunately, the proof of this result requires a relatively large constant $C$, making the lower bound trivial for small choices of $k$ (e.g., $k \le 10^4$). 

As main contribution of this paper, we provide almost matching lower and upper bounds on the score $s(n,\chi_{>k})$ which also demonstrate how $s(n,\chi_{>k})$ asymptotically depends on the parameter $k$. In contrast to Theorem~\ref{thm:saturationhefetz}, our results are also non-trivial for small choices of $k$ and therefore enhance the intuitive understanding of the game process. The first result is a general lower bound on the score and improves Theorem~\ref{thm:saturationhefetz}.

\begin{theorem}\label{thm:saturationlower}
Let $k \ge 3$ and $n>k$. Then 
\[s(n,\chi_{>k}) \ge \binom{n}{2}\left(1-\frac{1}{\lceil k/2 \rceil}\right) \ge \binom{n}{2}\left(1-\frac{2}{k}\right).\]
\end{theorem}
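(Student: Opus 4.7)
Set $m := \lceil k/2 \rceil$. Since the final graph $G_{\mathrm{end}}$ is complete $k$-partite with some parts $P_1,\dots,P_k$, we have $|E(G_{\mathrm{end}})| = \binom{n}{2} - \sum_{i=1}^k \binom{|P_i|}{2}$. If Maxi can force $\max_i |P_i| \le \lceil n/m \rceil$, then
\[
\sum_i |P_i|^2 \le \max_i |P_i| \cdot \sum_i |P_i| \le \lceil n/m\rceil \cdot n,
\]
and a short calculation gives $\sum_i \binom{|P_i|}{2} \le \binom{n}{2}/m$, yielding the required lower bound $\binom{n}{2}(1 - 1/m)$. So the whole problem reduces to producing a Maxi strategy that caps the maximum color class of $G_{\mathrm{end}}$ at $\lceil n/m\rceil$.

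The plan is to have Maxi fix an equipartition $V = V_1 \sqcup \cdots \sqcup V_m$ with $|V_j| \in \{\lfloor n/m\rfloor, \lceil n/m\rceil\}$ and aim to force the complete $m$-partite graph $H := K_{V_1,\ldots,V_m}$ to become a subgraph of $G_{\mathrm{end}}$. Note that $H$ has chromatic number $m \le k$, so this is at least not ruled out by the colorability constraint. If $H \subseteq G_{\mathrm{end}}$ then every independent set of $G_{\mathrm{end}}$ lies inside a single block, so each color class $P_i$ is contained in some $V_j$ and satisfies $|P_i| \le \lceil n/m\rceil$. Maxi's strategy is then the natural one: on each of her turns she plays some unplayed edge of $H$ whose insertion keeps the graph $k$-colorable, if such an edge exists, and otherwise plays any legal edge.

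The main technical claim to establish is: whenever $G$ is $k$-colorable and $H \not\subseteq G$, some missing edge of $H$ is legally playable. I would try to prove this via a Kempe-chain recoloring argument. Starting from any $k$-coloring $\chi$ of $G$, there are two cases. If every color class of $\chi$ is contained in a single block then $\chi$ refines the block partition, so every missing edge of $H$ is bichromatic in $\chi$ and Maxi can play any of them. Otherwise some color class meets two blocks $V_i, V_j$ at vertices $u$ and $v$ respectively; then $\{u,v\}$ is a missing edge of $H$ and the task is to modify $\chi$ so that $u$ and $v$ get different colors. Since $2m \ge k$, there is, morally, enough color freedom for this, and one hopes to realize it by a swap along a bichromatic Kempe component containing $u$ but not $v$.

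The main obstacle I foresee is making the Kempe step go through in all cases: Mini may engineer a $G$ in which certain pairs of vertices from different blocks are forced to agree in every $k$-coloring (as is already possible in small examples like $G=K_4$ minus an inter-block edge for $k=3$). When a single swap gets stuck, my back-up plan is either to chain several Kempe swaps together, using the slack $2m\ge k$ to keep finding fresh colors, or to switch to an amortized accounting: charge each inter-block edge of $H$ that ends up missing in $G_{\mathrm{end}}$ against one or more intra-block edges which Mini must have played in order to build the rigid structure that blocked it. Either way, the delicate point is to quantify this "cost of blocking" tightly enough that the net loss remains within the slack $n(m-1)/(2m)$ between $|H|$ and $\binom{n}{2}(1-1/m)$.
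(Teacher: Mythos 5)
Your opening reduction is sound and matches the paper's: it suffices for Maxi to cap every color class of $G_{end}$ at $\lceil n/m\rceil$ with $m=\lceil k/2\rceil$. But the strategy you propose to achieve this has a genuine gap, and it is exactly the one you flag yourself. The ``main technical claim'' --- that whenever $H\not\subseteq G$ some missing edge of $H$ is legally playable --- is false, not just hard to prove. Your own example already kills it ($k=3$, $K_4$ minus an inter-block edge forces the two endpoints to share a color in every $3$-coloring), and for general $k$ Mini can manufacture such rigidity wholesale: a copy of $K_k$ played inside a single block uses all $k$ colors, after which no outside vertex can be joined to all of it, so $\Omega(n)$ edges of $H$ are permanently blocked by a single $O(k^2)$-edge gadget, and Mini can plant many disjoint gadgets. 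Neither of your fallbacks is carried out: chained Kempe swaps cannot recolor a structure that admits essentially one coloring, and the amortized ``cost of blocking'' accounting is not quantified --- note that the quantity you ultimately need to control is not the number of missing $H$-edges but $\sum_i\binom{|P_i|}{2}$, and you give no argument that Mini cannot, within her $\Theta(n^2)$ moves, merge blocks into a color class of size exceeding $\lceil n/m\rceil+O(1)$. As written, the proof does not go through.

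The paper sidesteps this by choosing a much sparser target certifying the same independence bound: instead of the complete $m$-partite graph $H$ (with $\Theta(n^2)$ edges Maxi must win), Maxi partitions $V$ into $\lceil n/m\rceil$ groups of size at most $m$ and turns each into a clique, a structure with only $O(nm)$ edges. Disjoint $K_m$'s also force every independent set of $G_{end}$ to have size at most $\lceil n/m\rceil$, and the condition $k\ge 2m-1$ guarantees (via Lemma~\ref{lem:coloring}) that these clique edges stay legal. The heart of the paper's proof is then a pace-keeping argument: a potential $\alpha$ counting Maxi's progress toward her cliques is compared against $\phi(A,B,t)$ for all subsets $A$, and a submodularity lemma (Lemma~\ref{lem:dangeroussets}) shows the ``dangerous'' sets have a unique minimal element that Maxi can always repair in one move. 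Because her target has only $O(nm)$ edges while she gets $\Theta(n^2)$ moves, she can afford to both build and defend --- which is precisely the slack your dense target $H$ does not have.
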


Note that for $k=k(n)$ and $n \rightarrow \infty$, Theorem~\ref{thm:saturationlower} and the Tur\'{a}n number $ex(n,\chi_{>k})$ together imply
\[s(n,\chi_{>k}) = \binom{n}{2}-\Theta\Big(\frac{n^2}{k}\Big).\]

Next, we provide a general upper bound which proves that if Mini follows an optimal strategy, the number of missing edges at the end of the game is by a constant factor larger than in a balanced complete $k$-partite graph.

\begin{theorem}\label{thm:saturationupper}
Let $k\ge 4$ and $n>k$. Then 
\[s(n,\chi_{>k}) \le \binom{n}{2} \Big(1-\frac{1}{k-\lfloor (k-1)/3 \rfloor}\Big)+n \le \binom{n}{2} \Big(1-\frac{3}{2k+3}\Big)+n.\]
In particular, if $k$ is fixed and $n\rightarrow \infty$, then 
\[s(n,\chi_{>k}) \le \binom{n}{2}\Big(1-\frac{3}{2k+3}+o(1)\Big).\]
\end{theorem}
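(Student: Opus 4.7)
The plan is to design a strategy for Mini that forces $s = \lfloor(k-1)/3\rfloor$ of the $k$ color classes in the final complete $k$-partite graph to be singletons. By Jensen's inequality applied to the remaining $k-s$ parts (which together contain $n-s$ vertices), the number of missing edges is then at least $\tfrac{(n-s)^2}{2(k-s)}-\tfrac{n-s}{2}$, and substituting this into $s(n,\chi_{>k}) = \binom{n}{2} - \sum_i \binom{n_i}{2}$ gives
\[s(n,\chi_{>k}) \leq \binom{n}{2} - \frac{(n-s)^2}{2(k-s)} + \frac{n-s}{2} \leq \binom{n}{2}\left(1-\frac{1}{k-s}\right) + n,\]
where the final inequality reduces after clearing denominators to $n(3s-1)\leq k(n+s)$, which holds because $3s\leq k-1$. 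The alternative form with $3/(2k+3)$ then follows from $k-s = k - \lfloor(k-1)/3\rfloor \leq (2k+3)/3$.

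Mini's strategy will fix disjoint subsets $U=\{u_1,\ldots,u_s\}$ (intended singletons) and $W=\{w_1,\ldots,w_{k-s}\}$ (anchor vertices). In her first $\binom{k}{2}$ turns, Mini plays all edges inside $U\cup W$, constructing a $K_k$ on these $k$ vertices; each of these moves is legal because only $O(k^2)$ edges have been played so far, too few for Maxi to have created a $(k+1)$-chromatic obstruction near $U\cup W$. Once $K_k$ is in place, the vertices of $U\cup W$ lie in $k$ distinct color classes in every proper $k$-coloring of the current graph, so every $x\notin U\cup W$ ends the game in the color class of its unique remaining non-neighbor in $U\cup W$. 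In particular, $u_i$ is a singleton iff the edge $xu_i$ is played for every $x$. In the remaining moves, Mini plays reactively: writing $c(x)$ and $d(x)$ for the currently played edges of the forms $xu_j$ and $xw_l$ respectively, if Maxi's last move was $xw_l$ with $c(x)<s$ then Mini plays a still-missing edge $xu_i$ at the same vertex $x$; otherwise she plays any still-missing $xu_i$ for some $x$ with $c(x)<s$, or any legal edge if none remain.

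The main technical step is to maintain, after each of Mini's moves in the reactive phase, the invariant that every $x\notin U\cup W$ satisfies either $c(x)=s$ or $c(x)=d(x)$. Together with the legality bound $c(x)+d(x)\leq k-1$, this guarantees that whenever Mini wants to play $xu_i$, the vertex $x$ still has a non-neighbor in $(U\cup W)\setminus\{u_i\}$, so the move is legal; hence $c(x)=s$ at the end of the game for every $x$, forcing each $u_i$ to be a singleton.

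The hardest part will be analyzing the initial $\binom{k}{2}$ setup moves: during them, Maxi may play up to $\binom{k}{2}$ edges of the form $xw_l$ before Mini's reactive rule is active, creating initial deficits $d(x)>c(x)$ at several vertices and breaking the invariant. For such pre-damaged vertices, Mini's response may fail to catch up, causing them to land in some $u_i$-class; but the number of such exceptional vertices is only $O(k^2)$, contributing at most $O(k^4)$ extra missing edges, which is absorbed into the additive $+n$ of the bound. The threshold $s=\lfloor(k-1)/3\rfloor$ is chosen precisely so that the per-vertex race bound $s\leq k-1-s$ holds with enough slack to recover from the $O(k^2)$ pre-damage without cascading failures; making this precise will require a potential argument bounding $\sum_x\max\{0,\min(d(x),s)-c(x)\}$ throughout the remainder of the game.
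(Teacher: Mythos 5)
The counting step in your first paragraph matches the paper's and is fine, but the strategy itself has two gaps that are fatal as written. First, the opening phase fails: you cannot build a $K_k$ on a \emph{fixed} set $U\cup W$ against an adversarial Maxi. Maxi needs only $k$ moves to join some outside vertex $x$ to every vertex of $U\cup W$ (she can identify these vertices after watching your first few moves), while you need $\binom{k}{2}$ moves to finish the clique; once $x$ dominates $U\cup W$, the last clique edge would create a $K_{k+1}$ and is therefore illegal, so the clique is never completed and the anchoring argument collapses. Your justification that ``$O(k^2)$ edges are too few to create a $(k+1)$-chromatic obstruction'' is backwards: $k$ well-placed edges suffice, and Maxi has $\Theta(k^2)$ of them. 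This is precisely why the paper has Mini build only a clique of size $\ell=\lfloor(k-1)/3\rfloor$ (a dominated $K_\ell$ is still harmless for $k$-colorability) and, more importantly, why it chooses the star vertices \emph{adaptively} during the game rather than fixing them in advance.

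Second, and more fundamentally, your legality criterion for the reactive phase is local where the obstruction is global. You argue that $xu_i$ is playable as long as $x$ still has a non-neighbour in $(U\cup W)\setminus\{u_i\}$, but that only rules out a $K_{k+1}$ through $x$; it says nothing about whether the \emph{whole} graph stays $k$-colorable. Maxi is free to build dense structure entirely among the outside vertices, and your invariant ($c(x)=s$ or $c(x)=d(x)$, with $d$ counting only edges into $W$) does not control $|E[A]|$ for sets $A$ of outside vertices at all. Controlling exactly this is the heart of the paper's proof: Mini maintains, for \emph{every} subset $A\subseteq V\setminus B$, the inequality $\alpha(A,S,t)\ge |E_t[A]|+|E_t[A,B\setminus S]|$, which via Lemma~\ref{lem:coloring} keeps $V\setminus S$ $(k-\ell)$-colorable (this is where $k\ge 3\ell+1$ enters, through $2\ell<k-\ell$), and the submodularity argument of Lemma~\ref{lem:dangeroussets} is needed to show that a unique minimal ``dangerous'' set exists, so that a single edge per turn repairs all violated subsets simultaneously. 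Your proposal has no analogue of this machinery, and your closing sentence explicitly defers the missing argument. (The error accounting is also off: letting $O(k^2)$ vertices land in the would-be singleton classes reduces the count of missing edges by up to $\Theta(k^2\cdot n/k)=\Theta(kn)$, not $O(k^4)$, and neither quantity is absorbed by the additive $+n$ unless $k$ is bounded.)
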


The provided lower and upper bounds on $s(n,\chi_{>k})$ are matching up to a small constant factor in the term that counts the missing edges of the final graph. It remains an interesting problem to determine the correct constant. 

In addition, we investigate the specific game with parameter $k=4$ where it turns out that the upper bound given by Theorem~\ref{thm:saturationupper} is tight.

\begin{theorem}\label{thm:4color}
Let $n\ge 5$. Then $s(n,\chi_{>4})=n^2/3+O(n)$.
\end{theorem}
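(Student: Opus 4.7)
For the upper bound, I invoke \thmref{saturationupper} with $k = 4$: since $\lfloor (k-1)/3 \rfloor = 1$, we have $k - \lfloor(k-1)/3\rfloor = 3$, and the bound reads $s(n, \chi_{>4}) \le \binom{n}{2}(1 - 1/3) + n = n^2/3 + O(n)$.

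The lower bound $s(n, \chi_{>4}) \ge n^2/3 - O(n)$ is the real content, since \thmref{saturationlower} at $k=4$ only yields $\binom{n}{2}(1 - 1/2) \sim n^2/4$. My first step is to recast the goal: the final graph $G_{end}$ is a complete $4$-partite graph with positive part sizes $n_1, n_2, n_3, n_4$ summing to $n$, so its edge count is $\binom{n}{2} - \sum_i \binom{n_i}{2} = (n^2 - \sum_i n_i^2)/2$. The desired bound is therefore equivalent to producing a Maxi-strategy that keeps $\sum_i n_i^2 \le n^2/3 + O(n)$. A clean sufficient condition is $\max_i n_i \le n/3 + O(1)$, since then $\sum_i n_i^2 \le n\cdot \max_i n_i$ immediately; in turn this is equivalent to each vertex finishing with degree at least $2n/3 - O(1)$, using $\deg(v) = n - n_{\text{part}(v)}$ in a complete multipartite graph.

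My plan for Maxi is a degree-greedy rule: on each of her turns, play a legal edge incident to a vertex of current minimum degree. The analysis splits by the chromatic number of the running graph $G$. While $\chi(G) \le 3$, every non-edge is legal, so Maxi can raise the minimum degree unimpeded. Once $\chi(G) = 4$, legal edges are confined to within a colour class of some $4$-coloring of $G$, and the real work is to argue that Maxi can still boost low-degree vertices. The structural claim I would aim to prove is that whenever a vertex $v$ has current degree below $2n/3 - O(1)$ and no legal incident edge, $v$ lies in a highly constrained local configuration of bounded size whose vertices are jointly forced into a single small part. By a counting argument, only $O(1)$ such ``stuck'' configurations can appear during the game, and their total contribution to $\sum_i n_i^2$ can then be bounded by $O(n)$, which gets absorbed into the error term.

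The main technical obstacle I foresee is this structural analysis in the $4$-chromatic regime: the set of legal edges depends on the intersection of potentially many $4$-colorings of $G$, which can overlap in subtle ways, and bounding the size of ``forced'' parts requires a careful case analysis reminiscent of the proof of \thmref{saturationupper}. To manage this, I would supplement the greedy rule with a potential function tracking $\sum_i n_i^2$ over the currently forced partition, and use it to rule out moves by Mini that would simultaneously lock many vertices. Combining the $3$-chromatic phase, the greedy $4$-chromatic analysis, and the potential argument should yield $\max_i n_i \le n/3 + O(1)$ and hence $s(n, \chi_{>4}) \ge n^2/3 - O(n)$, completing the proof.
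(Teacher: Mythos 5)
Your upper bound is exactly the paper's (apply \thmref{saturationupper} with $k=4$, $\ell=1$), and your reduction of the lower bound to the condition $\max_i n_i \le n/3+O(1)$ is a legitimate sufficient condition, consistent with what optimal play actually produces (one class of size $1$ and three classes of size roughly $n/3$). But the lower bound is where all the work lies, and your argument for it is a placeholder rather than a proof. The two steps you defer --- that a low-degree vertex with no legal incident edge sits in a ``highly constrained local configuration of bounded size,'' and that only $O(1)$ such configurations arise --- are precisely the content of the theorem, and neither is justified. Note first that a small slip signals the difficulty: an edge $\{u,v\}$ is legal iff $u$ and $v$ receive \emph{different} colours in some proper $4$-colouring of $G$, not the same colour; an edge at $v$ is illegal exactly when $v$ and that non-neighbour are forced into the same class by \emph{every} $4$-colouring. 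So the event you must exclude is that Mini gradually welds $v$ together with a set of $n/3+\omega(1)$ non-neighbours into one forced class, and nothing in the rule ``play at a minimum-degree vertex'' visibly prevents this: a vertex's current degree says nothing about which class it is forced into until the game is essentially over, and the set of legal edges is governed by the global structure of all $4$-colourings, not by local degrees. There is also no a priori reason the number of ``stuck'' configurations is $O(1)$ against an adversary whose entire goal is to create them; that bound would itself need a strategy-dependent invariant, which you do not supply.

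The paper resolves exactly this difficulty by replacing degrees with a monotone certificate of separation: Maxi greedily builds a collection of vertex-disjoint cliques covering $V$ up to $O(1)$ vertices, choosing the next clique to have size $4$, $3$, or $2$ adaptively (\lemrefss{K3}{K4}{K2}) according to what Mini has just played, and maintaining the invariant that the potential $\phi(i)$ --- the number of Mini's edges touching the still-uncovered vertices --- stays bounded by an absolute constant. A clique of size $j$ permanently certifies that its $j$ vertices lie in $j$ distinct colour classes of $G_{end}$, which is information that survives all future moves; the proof then finishes with an explicit optimization over the numbers $a_2,a_3,a_4$ of cliques of each size, using the structural constraint $a_2\le 2a_4$ enforced by the strategy, to get $m\ge n^2/3+O(n)$. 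If you want to salvage your plan, you would need to convert your greedy rule into something that produces such durable certificates (e.g.\ triangles among the non-star vertices), since otherwise the step ``only $O(1)$ stuck configurations occur'' has no foothold. As written, the lower bound is not established.
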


We prove Theorem~\ref{thm:saturationlower} and Theorem~\ref{thm:saturationupper} by using carefully chosen potential functions that are closely related to the density of induced subgraphs. We then define the strategies in terms of these potentials in a general, abstract way such that we need to deal only with a reasonable number of case distinctions. This is a novel approach for the analysis of saturation games.

We start by introducing notations and describing general aspects of our proofs strategies on a high level in Section~\ref{sec:saturationpreliminiaries}. Then in Section~\ref{sec:saturationlower} we provide a general strategy for Maxi and use it to prove Theorem~\ref{thm:saturationlower}. In Section~\ref{sec:saturationupper} we investigate the game from Mini's perspective and show Theorem~\ref{thm:saturationupper}. Afterwards we study the special case $k=4$ in Section~\ref{sec:saturation4color} and prove Theorem~\ref{thm:4color} by using a more specific strategy for Maxi. Finally the last section contains some concluding remarks and open problems.

\section{Preliminaries}\label{sec:saturationpreliminiaries}

Let $k$ and $n$ be two integers such that $n>k$. We study the saturation game on a set $V$ of $n$ vertices w.r.t.\ the monotone property $\chi_{>k}$. The game is considered as a process evolving in time where $G(t)=(V,E_t)$ denotes the graph at the moment where Mini and Maxi have played $t$ edges in total. Note that we start with the empty graph $G(0)$. The game stops at time $t_{end}$ and thus ends with a graph $G_{end}=G(t_{end})$ which is a complete $k$-partite graph. It turns out that in all our proofs, the game is partitioned into two phases. When analyzing the game from the perspective of one specific player, we have a first phase in which our player wants to create a certain graph structure that is suitable for her goal of forcing $G_{end}$ to be either sparse or dense. Once the desired graph structure is present, we enter the second phase where we allow our player to play arbitrarily until the graph is saturated.

We will define the strategies via potential functions. Informally speaking, we measure the progress of ``our'' player by a function $f:V \times \N_0 \rightarrow \N_0$. Then the progress of the player in a set $A \subseteq V$ at time $t$ is given by 
\[f(A,t) := \sum_{v \in A} f(v,t).\] 
We do not yet specify $f$ as the choice of the concrete function depends on the identity of the player and on further notations. In order to quantify the progress of the opponent, we introduce the following notation. Let $A$ and $B$ be two disjoint subsets of $V$ and let $t$ be an integer. We define
\begin{equation}\label{eq:phidef}
\phi(A,B,t)=|E_t[A]|+|E_t[A,B]|.
\end{equation} 
That is, we count the number of edges in the graph $G(t)$ that are either contained in the subgraph induced by $A$ or in the cut between $A$ and $B$. Suppose there exists a vertex set $B$ on which our player has already created her desired structure. Then it turns out that for a set $A \subseteq V \setminus B$, the function $\phi(A,B,t)$ is suitable for measuring the progress of the opponent on set $A$. The goal of our player is now the following: for every subset $A \subseteq V \setminus B$, her own pace should be at least as fast as the pace of her opponent. Hence, she aims to play such that at time $t$, $f(A,t) \ge \phi(A,B,t)$ holds for all $A \subseteq V \setminus B$.

After introducing the most important notations, we provide a criterion for a graph being $k$-colorable. Recall that the $k$-core of a graph $G$ denotes the largest induced subgraph where every vertex has degree at least $k$. Clearly, if the $k$-core is $k$-colorable, then $G$ itself is $k$-colorable because we can take a proper vertex coloring of the $k$-core and extend it vertex by vertex to the whole graph. The following easy lemma is a small modification of this fact. We use it in our proofs whenever we want to verify that an edge proposed by a strategy can indeed be inserted to the graph without violating the colorability constraint.

\begin{lemma} \label{lem:coloring}
Let $k \in \N$, let $G=(V,E)$ be a graph, and let $B \subseteq V$. Suppose that the induced subgraph $G[B]$ is $k$-colorable and that for every non-empty subset $A \subseteq V \setminus B$ it holds $2|E[A]|+|E[A,B]| < k \cdot |A|$. Then $G$ is $k$-colorable.
\end{lemma}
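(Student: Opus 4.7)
The plan is to turn the hypothesis into a degeneracy-style ordering of $V \setminus B$ and then extend the given $k$-coloring of $G[B]$ greedily along this ordering.

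First I would reinterpret the assumption. For any non-empty $A \subseteq V \setminus B$, summing vertex by vertex gives
\[
2|E[A]|+|E[A,B]| \;=\; \sum_{v \in A}\bigl(|N(v)\cap A|+|N(v)\cap B|\bigr),
\]
so the hypothesis $2|E[A]|+|E[A,B]|<k|A|$ says the average value of $|N(v)\cap A|+|N(v)\cap B|$ over $v\in A$ is strictly less than $k$. By averaging, there exists some $v \in A$ with $|N(v)\cap A|+|N(v)\cap B|<k$.

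Next I would iteratively peel off such vertices. Set $A_0 = V\setminus B$ and, for $j=1,2,\dots,m=|V\setminus B|$, choose $v_{m-j+1}\in A_{j-1}$ with $|N(v_{m-j+1})\cap A_{j-1}|+|N(v_{m-j+1})\cap B|<k$, which is possible by the observation above applied to the (non-empty) subset $A_{j-1}\subseteq V\setminus B$; then define $A_j = A_{j-1}\setminus\{v_{m-j+1}\}$. This produces an ordering $v_1,\dots,v_m$ of $V\setminus B$ with the property that, for every $i$, the set of neighbors of $v_i$ in $\{v_1,\dots,v_{i-1}\}\cup B$ has size strictly less than $k$ (since $\{v_1,\dots,v_{i-1},v_i\}$ is exactly the set $A_{m-i+1}$ at the moment $v_i$ was chosen, and $v_i$ is not its own neighbor).

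Finally I would color. Start from the given proper $k$-coloring of $G[B]$, then process $v_1,v_2,\dots,v_m$ in order, at each step assigning $v_i$ a color from $\{1,\dots,k\}$ not used by any already-colored neighbor. Since at most $k-1$ colors are forbidden at step $i$, such a color exists, and the resulting assignment is a proper $k$-coloring of $G$. There is no real obstacle here beyond bookkeeping: the argument is the standard greedy-coloring / degeneracy trick, the only twist being that the ``kernel'' $B$ is fixed in advance and its internal edges do not need to satisfy the counting condition, which is exactly why the hypothesis weights edges inside $A$ with a factor $2$ while edges from $A$ to $B$ are weighted only once.
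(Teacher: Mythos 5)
Your argument is correct and is essentially identical to the paper's own proof: both repeatedly extract from the current subset of $V\setminus B$ a vertex with fewer than $k$ neighbors among the remaining uncolored vertices and $B$ (guaranteed by averaging the hypothesis), producing an ordering of $V\setminus B$ with back-degree at most $k-1$, and then greedily extend the given $k$-coloring of $G[B]$. (The only blemish is a harmless off-by-one in your indexing: $\{v_1,\dots,v_i\}$ is $A_{m-i}$, not $A_{m-i+1}$.)
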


\begin{proof}Let $A' := V \setminus B$. By assumption it holds 
\[\sum_{v \in A'} \deg(v) = 2|E[A']|+|E[A',B]|<k \cdot |A'|.\] 
Hence there exists a vertex $v_1 \in A'$ with degree at most $k-1$. Next, we apply the same argument for the set $A'' := V \setminus (B \cup \{v_1\})$, and afterwards we iterate the argument for all remaining vertices to find an ordering of $V$ where all vertices of $A'$ have back-degree at most $k-1$. Hence, if $G[B]$ is $k$-colorable, we can take an arbitrary vertex coloring of $G[B]$, use the ordering of the vertices, and extend the coloring vertex by vertex to the whole graph $G$ since there is always at least one available color.
\end{proof}

\section{Lower Bound}\label{sec:saturationlower}

In this section we prove Theorem~\ref{thm:saturationlower}. We thus provide a strategy for Maxi that ensures that the game process lasts sufficiently long and $G_{end}$ becomes sufficiently dense. The main idea is the following: Maxi aims to create a collection of vertex-disjoint cliques that cover the entire vertex set $V$. Then every independent set of $G_{end}$ can contain at most one vertex per clique, yielding an upper bound on the independence number of $G_{end}$ and thus a lower bound on the number of edges of the complete $k$-partite graph $G_{end}$. Theorem~\ref{thm:saturationlower} follows directly from the following lemma.

\begin{lemma} \label{lem:cliques}
Let $n \in \N$, $m \ge 2$, and $k \ge 2m-1$. Then in the colorability saturation game with $k$ colors and $n$ vertices, Maxi has a strategy such that in $G_{end}$, the vertex set is covered by $\lceil \frac{n}{m} \rceil$ vertex-disjoint cliques, all having size at most $m$.
\end{lemma}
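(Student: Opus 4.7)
The plan is to give Maxi a strategy defined by a potential function, following the paradigm of Section~\ref{sec:saturationpreliminiaries}. Maxi fixes an assignment of each vertex to a ``team'' of size at most $m$ and, on every turn, plays a missing intra-team edge, prioritizing the team of the vertex Mini most recently touched. Once all intra-team edges are present, the $\lceil n/m\rceil$ teams form the desired clique cover of $G_{end}$. Since some teams become complete cliques earlier than others during the game, let $B_t \subseteq V$ denote the union of the teams that are already complete cliques in $G(t)$, and let $f(v, t)$ be the intra-team degree of $v$ in $G(t)$, which satisfies $f(v,t)\le m-1$.

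The invariant Maxi tries to maintain is
\[
\phi(A, B_t, t) \le f(A, t) \qquad \text{for every } A \subseteq V \setminus B_t.
\]
Combined with $f(v,t)\le m-1$ and $k\ge 2m-1$, this yields
\[
2|E_t[A]| + |E_t[A, B_t]| \le 2\phi(A, B_t, t) \le (2m-2)|A| < k|A|,
\]
which is exactly the density hypothesis of Lemma~\ref{lem:coloring}. Since $G(t)[B_t]$ is a disjoint union of cliques of size at most $m\le k$ (up to Mini-played inter-team edges, whose count is controlled by the same invariant), the lemma shows that $G(t)$ is $k$-colorable throughout the game, so every intra-team edge Maxi proposes is indeed legal.

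To maintain the invariant, Maxi matches Mini move-for-move on $V\setminus B_t$: if Mini plays an edge with an endpoint $u\in V\setminus B_t$, then $\phi(A,B_t,t)$ grows by at most one on sets $A$ containing $u$, and by zero on all other sets. Maxi replies by playing an intra-team edge incident to $u$ inside $T(u)$ (or, if $f(u,t)=m-1$, by playing a missing intra-team edge elsewhere in $T(u)$, which after a few more such moves completes $T(u)$ and absorbs it into $B_t$, dissolving the deficit at $u$). Either way, $f(A,t)$ grows by at least one on every set whose $\phi$-count was touched, and the invariant is restored.

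The main obstacle is to verify that Maxi's response always restores the invariant in a single move. The delicate case is when a team $T_i$ completes and is absorbed into $B_{t+1}$: any pre-existing Mini-edges between $T_i$ and $V\setminus B_t$ get reclassified as edges into $B_{t+1}$, simultaneously boosting $\phi$ on many $A$. The strict inequality $2(m-1)<2m-1=k$ is precisely the slack needed to absorb this reclassification, and a Hall-type selection argument lets Maxi choose which intra-team edge to play so that no $A$-deficit is left uncovered after her move.
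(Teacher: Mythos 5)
Your high-level plan is the same as the paper's (fixed partition into teams of size at most $m$, a potential $f=$ intra-team degree measured against $\phi$, and Lemma~\ref{lem:coloring} via $2(m-1)<2m-1\le k$), but the two places you flag as ``the main obstacle'' are exactly where the proof lives, and your proposed fixes do not work. First, a single Mini move can create a whole family of incomparable dangerous sets $A$ with $\phi(A,B,t)>f(A,t)$, and Maxi gets only one edge in reply; to fix them all at once she needs to know that this family is closed under intersection, so that it has a unique minimal element $A_0$ contained in every dangerous set, and that playing an intra-team edge at a non-full vertex of $A_0$ repairs every deficit simultaneously. That intersection-closure (a submodularity computation for $\phi$ against the modular $f$) is the paper's Lemma~\ref{lem:dangeroussets}, and it is not a ``Hall-type selection argument'' --- Hall would let you match several deficits to several edges, but Maxi only has one move. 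Moreover, you must handle the case where every vertex of $A_0$ is already full, so no $f$-increasing reply exists; your parenthetical ``play elsewhere in $T(u)$, which after a few more such moves completes $T(u)$'' leaves the invariant violated for an unbounded stretch during which Mini can keep attacking full vertices.

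Second, your choice $B_t=$ union of completed teams breaks the invariant at the moment of absorption. If a team $T_i$ completes while Mini has played edges from $T_i$ to some vertex $w\notin T_i$, those edges were previously charged only to sets containing their $T_i$-endpoints, but after $T_i\subseteq B_{t+1}$ they are charged to \emph{every} $A\ni w$ via $|E[A,B_{t+1}]|$; already for $A=\{w\}$ one gets $\phi(\{w\},B_{t+1},t)$ up to $m$ while $f(w)\le m-1$, and the deficit accumulates over several completed teams. The slack $2(m-1)<k$ is one unit of room in the degree hypothesis of Lemma~\ref{lem:coloring}; it cannot absorb a reclassification of order $\binom{m}{2}$ in the potential inequality, which is a different inequality. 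The paper avoids this entirely by \emph{not} taking $B$ to be a union of teams: $B$ grows only by absorbing the minimal dangerous set $A_0$ at the moment all of its vertices are full, and part (iii) of Lemma~\ref{lem:dangeroussets} shows that precisely this absorption restores $f(A)\ge\phi(A,B\cup A_0,\cdot)$ for all remaining $A$. Without an analogue of that lemma, your invariant is not maintained and the colorability of Maxi's proposed edges is unproven.
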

\begin{proof}[Proof of Theorem~\ref{thm:saturationlower}.]
Let $k\ge 3$, $n>k$, and put $m:=\lceil\frac{k}{2} \rceil$. $G_{end}$ is a complete $k$-partite graph and contains all $\binom{n}{2}$ possible edges except those where both incident vertices are contained in the same partition. By Lemma~\ref{lem:cliques}, Maxi has a strategy such that there are $\lceil \frac{n}{m} \rceil$ vertex-disjoint cliques as induced subgraphs, covering $V$ completely, where every clique contains at most $m$ vertices. Clearly, all vertices of such a clique belong to different color classes of the saturated graph $G_{end}$, therefore every independent set of $G_{end}$ has size at most $\lceil \frac{n}{m} \rceil$.

We claim that the number of missing edges is maximal if there are $\frac{n-k}{\lceil \frac{n}{m}\rceil-1}$ color classes of size $\lceil \frac{n}{m} \rceil$ and $k-\frac{n-k}{\lceil \frac{n}{m}\rceil-1}$ classes of size $1$. Indeed, for every other configuration we could move one vertex from a smaller color class to a larger color class and thereby increase the number of forbidden edges. Hence, the total number of missing edges in $G_{end}$ is at most
\[\frac{n-k}{\big\lceil \frac{n}{m}\big\rceil-1} \cdot \binom{\big\lceil\frac{n}{m}\big\rceil}{2}=\frac{n-k}{2}\Big\lceil\frac{n}{m}\Big\rceil \le \frac{(n-k)(n+m)}{2m} \le \binom{n}{2}\frac{1}{m},\]
proving the statement.
\end{proof}

Before the game starts, Maxi partitions the vertex set $V$ into \emph{disjoint} sets $V_1, \ldots, V_{\lceil\frac{n}{m}\rceil}$, where $|V_i|=m$ holds for all $i \le \frac{n}{m}$. Her goal is to play such that in $G_{end}$, each group $V_i$ induces a clique of size $|V_i|$. For every $1 \le i \le \lceil\frac{n}{m}\rceil$, every $v \in V_i$, and every $t \in \N_0$ we define 
\[\alpha(v,t) := |V_i \cap \Gamma_t(v)|,\] 
where $\Gamma_t(v)$ denotes the neighborhood of $v$ in $G(t)$. Furthermore, for every $A \subseteq V$ and every $t \in \N_0$ we put $\alpha(A,t) := \sum_{v \in A} \alpha(v,t)$. Clearly, for all $v \in V$ we start with $\alpha(v,0)=0$, before the $\alpha$-values start to increase during the game process. At time $t$, we call a vertex $v \in V_i$ \emph{full} if $\alpha(v,t)=|V_i|-1$, i.e., if $v$ is connected to all other vertices of its set $V_i$. 

We want to use the function $\phi(A,B,t)$ as defined in \eqref{eq:phidef} to measure the progress of Mini in subsets $A \subseteq V \setminus B$. The plan is to compare the $\alpha$-values with the $\phi$-values in order to determine where Maxi should insert her next edge. For all points in time $t \in \N_0$ and for all $B \subseteq V$ we put
\[\mathcal{D}(B,t) := \big\{A \subseteq V \setminus B \mid \alpha(A,t) < \phi(A,B,t)\big\}.\]
Informally speaking, the set $\mathcal{D}(B,t)$ contains all \emph{dangerous} subsets $A$ where Mini made more progress than Maxi until time $t$. Clearly, for all choices of $B$ we start with $\mathcal{D}(B,0)=\emptyset$. Finally, we put
\[A_0(B,t) := \bigcap_{A \in \mathcal{D}(B,t)} A.\]

So far, we didn't specify how we pick the set $B$. The concrete choice of $B$ depends on the game process and is quite subtle. We start with $B(0)=0$. Afterwards, for all points in time $t$ where Mini is playing we let $B(t)=B(t-1)$. Whenever Maxi is about to play at time $t$, before her turn we first define the set $B(t)$ according to the following rule.

\begin{enumerate}
\item[(U1)] If all vertices of $V \setminus B(t-1)$ are full in $G(t-1)$, we put $B(t) := V$. If $V \setminus B(t-1)$ contains non-full vertices, $\mathcal{D}(B(t-1),t-1)$ is non-empty, and all vertices of $A_0(B(t-1),t-1)$ are full in $G(t-1)$, we put $B(t) := B(t-1) \cup A_0(B(t-1),t-1)$.  In all other cases, we let $B(t) := B(t-1)$.
\end{enumerate}

Clearly, for all points in time $t$ the set $B(t)$ only contains vertices that are full in $G(t-1)$. (But not necessarily all of them!) As we will see later, it turns out that the rule (U1) guarantees that either the set $\mathcal{D}(B(t),t-1)$ is empty or $ A_0(B(t),t-1)$ contains vertices that are not yet full. We now continue by providing Maxi's strategy for playing her edge at time $t$. 

\begin{enumerate}
\item[(S1)] If $B(t)=V$ but $G(t-1)$ is not yet saturated, insert an arbitrary edge such that $G(t)$ is $k$-colorable.
\item[(S2)] If $\mathcal{D}(B(t),t-1)$ is empty and $B(t) \neq V$, let $v \in V \setminus B(t)$ be a vertex which is not yet full in $G(t-1)$. Insert a new edge $\{u,v\}$, where we require that $u$ is contained in the same group $V_i$ as $v$.
\item[(S3)] If $\mathcal{D}(B(t),t-1)$ is non-empty, let $v \in A_0(B(t),t-1)$ be a vertex which is not yet full in $G(t-1)$. Insert a new edge $\{u,v\}$, where we require that $u$ is contained in the same group $V_i$ as $v$.
\end{enumerate}

A priori, it is not evident that this strategy is well-defined. Amongst others, we have to verify that the rules (S1)-(S3) cover all cases and that the set $A_0(B(t),t-1)$ considered in (S3) is non-empty. When proving Lemma~\ref{lem:cliques} below we show at the same time that the proposed strategy is indeed well-defined. Before starting with the proof, we first state one additional technical lemma whose proof is deferred to the end of this section.

\begin{lemma} \label{lem:dangeroussets}
Let $f:V\rightarrow \N_0$ be a function, let $B \subset V$, and for all $A \subseteq V \setminus B$ and $t \in \N_0$ let $\phi(A,B,t)$ be defined as in \eqref{eq:phidef}. Let $t_0 \in \N_0$ be a point in time of the game process such that $G(t_0)$ is not saturated and such that for all $A \subseteq V \setminus B$ it holds $f(A) := \sum_{v \in A} f(v) \ge \phi(A,B,t_0)$. Then either the set 
\begin{equation}\label{eq:Doflemma}
\mathcal{C} := \big\{A \subseteq V \setminus B \mid f(A) < \phi(A,B,t_0+1) \big\}
\end{equation}
is empty, or the following statements are true.
\begin{enumerate}[(i)]
\item $A' := \cap_{A \in \mathcal{D}} A$ is non-empty and itself contained in $\mathcal{C}$.
\item For all $A \in \mathcal{C}$ it holds $\phi(A,B,t_0+1)=f(A)+1.$
\item For all $A \subseteq V \setminus (B \cup A')$ we have
\[f(A) \ge \phi(A \cup A',B,t_0+1)-\phi(A',B,t_0+1).\]
\end{enumerate}
\end{lemma}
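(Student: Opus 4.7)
The approach is to study the surplus function $g(X) := \phi(X, B, t_0+1) - f(X)$ on subsets $X \subseteq V \setminus B$, and derive everything from two properties: a supermodular identity inherited from $\phi$, and a universal ceiling $g(X) \le 1$. For the identity, I would first verify by double counting that for all $X, Y \subseteq V \setminus B$ and all $t$,
\[\phi(X \cup Y, B, t) + \phi(X \cap Y, B, t) = \phi(X, B, t) + \phi(Y, B, t) + |E_t[X \setminus Y, Y \setminus X]|.\]
This follows by classifying each edge $\{p, q\} \in E_t$ according to where its endpoints lie among the five blocks $X \cap Y$, $X \setminus Y$, $Y \setminus X$, $B$, and $V \setminus (X \cup Y \cup B)$: all contributions cancel except those from edges between $X \setminus Y$ and $Y \setminus X$, which contribute $1$ to $\phi(X \cup Y, B, t)$ but $0$ to the remaining three $\phi$-terms, producing the edge-count excess on the right. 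Since $f$ is additive (hence modular), subtracting $f(X \cup Y) + f(X \cap Y) = f(X) + f(Y)$ yields the supermodularity $g(X \cup Y) + g(X \cap Y) \ge g(X) + g(Y)$. The ceiling $g \le 1$ is then immediate from the hypothesis $f(X) \ge \phi(X, B, t_0)$ together with the fact that adding a single edge increases $\phi(X, B, \cdot)$ by at most one.

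With these two ingredients in hand, the three conclusions follow quickly. Statement (ii) holds because any $A \in \mathcal{C}$ satisfies $0 < g(A) \le 1$, forcing $g(A) = 1$. For (i), given $A_1, A_2 \in \mathcal{C}$, supermodularity gives $g(A_1 \cup A_2) + g(A_1 \cap A_2) \ge 2$, and the ceiling forces both values to equal $1$; thus $\mathcal{C}$ is closed under pairwise intersection, and since $g(\emptyset) = 0$ rules out $\emptyset \in \mathcal{C}$, we also conclude $A_1 \cap A_2 \ne \emptyset$. Iterating this closure over the finite family $\mathcal{C}$ produces $A' := \bigcap_{A \in \mathcal{C}} A \in \mathcal{C}$ with $A' \ne \emptyset$. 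For (iii), fix $A \subseteq V \setminus (B \cup A')$, so $A$ and $A'$ are disjoint and $f(A \cup A') = f(A) + f(A')$; combining $g(A \cup A') \le 1$ with (ii) applied to $A'$ yields
\[\phi(A \cup A', B, t_0+1) - \phi(A', B, t_0+1) \le \bigl(f(A \cup A') + 1\bigr) - \bigl(f(A') + 1\bigr) = f(A),\]
which is exactly the desired inequality.

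The main technical obstacle is the supermodular identity for $\phi$: it is not deep, but verifying it cleanly requires a mechanical case analysis across edge configurations. Once this identity is established, the remainder of the argument reduces to a short chain of manipulations using modularity of $f$ and the universal bound $g \le 1$, with no further case distinctions required.
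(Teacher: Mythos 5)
Your proof is correct, and it reorganizes the argument in a way that is genuinely cleaner than the paper's, even though both rest on the same two ingredients (supermodularity of $\phi$ in its first argument, and the fact that one move raises $\phi$ by at most one). The paper works at time $t_0$: it first shows $f(A)=\phi(A,B,t_0)$ for every $A\in\mathcal{C}$, applies the supermodular inequality to $\phi(\cdot,B,t_0)$ together with modularity of $f$ to force equality throughout, and then transfers the conclusion to time $t_0+1$ by a case analysis on where the newly played edge $e=\{x,y\}$ lands (both endpoints in $A_1\cap A_2$, or one in $A_1\cap A_2$ and one in $B$); a similar edge-tracking step reappears in its proof of (iii). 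By instead packaging everything into the surplus $g(X)=\phi(X,B,t_0+1)-f(X)$ and proving the two global facts ``$g$ is supermodular'' and ``$g\le 1$ everywhere,'' you recover the standard observation that the set of maximizers of a supermodular function is closed under union and intersection, and (i)--(iii) all drop out by arithmetic with no further inspection of the new edge. What your route buys is uniformity and the elimination of the edge-location case distinctions; what the paper's route makes slightly more visible is the game-theoretic meaning of the intermediate identity $f(A)=\phi(A,B,t_0)$ (a set becomes dangerous only if it was exactly tight before the opponent's move), which is reused verbatim in the applications of the lemma. One small point to make explicit if you write this up: the hypothesis that $G(t_0)$ is not saturated only serves to guarantee that a move is actually played at time $t_0+1$; your bound $\phi(X,B,t_0+1)\le\phi(X,B,t_0)+1$ is what actually carries the argument.
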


\begin{proof}[Proof of Lemma~\ref{lem:cliques}.]
Let $n \in \N$, let $m \ge 2$, and let $k \ge 2m-1$. Suppose Maxi applies the proposed strategy. We prove by induction that as long as $G(t-1)$ is not saturated, for Maxi's move at time $t$ the following invariants hold.
\begin{enumerate}
\item[(I1)] Exactly one rule of (S1)-(S3) can be applied.
\item[(I2)] Let Maxi insert the desired edge, regardless whether $G(t)$ is $k$-colorable or not. Then the set $\mathcal{D}(B(t),t)$ is empty.
\item[(I3)] Maxi can play her edge without violating the colorability constraint.
\item[(I4)] If there is a non-full vertex $v \in V \setminus B(t)$ in the graph $G(t)$, then $G(t)$ is not yet saturated.
\end{enumerate}

Recall that we start with $B(0)=\emptyset$ and $\mathcal{D}(B(0),0)=\emptyset$.  Let $t$ be a point in time such that $G(t-1)$ is not saturated and assume that either it is Maxi's first move (providing the base case) or that by induction (I1)-(I4) were true for all previous moves of Maxi. We first check property (I1). If every vertex is full in $G(t-1)$, then by (U1) we have $B(t)=V$ and Maxi applies (S1) for the remainder of the game. So let us assume that not every vertex is full in $G(t-1)$. If the set $\mathcal{D}(B(t-1),t-1)$ is empty, we have $B(t)=B(t-1)$ and (S2) is matching. So we can assume that $\mathcal{D}(B(t-1),t-1)$ is non-empty. Then $t>1$, and either by induction we have $\mathcal{D}(B(t-2),t-2)=\emptyset$, or $t=2$ and $\mathcal{D}(B(0),0)=\emptyset$. Hence we can apply Lemma~\ref{lem:dangeroussets} with $f(v)=\alpha(v,t-2)$, $B=B(t-2)$, and $t_0=t-2$. For these choices, the set $\mathcal{C}$ as defined in \eqref{eq:Doflemma} contains all subsets $A \subseteq V \setminus B(t-2)$ such that $\alpha(A,t-2) < \phi(A,B(t-2),t-1)$. Since $\alpha(v,t-1) \ge \alpha(v,t-2)$ holds for every vertex $v \in V$, we have 
\[ \emptyset \neq \mathcal{D}(B(t-1),t-1) \subseteq \mathcal{C}.\]
So $\mathcal{C}$ is non-empty too. By statement (i) of Lemma~\ref{lem:dangeroussets}, $A' = \cap_{A \in \mathcal{C}}$ is non-empty and itself a member of $\mathcal{C}$, i.e., $\alpha(A',t-2) < \phi(A',B(t-2),t-1)$. Combining this fact with the assumption $\mathcal{D}(B(t-1),t-1)\neq \emptyset$ and with statement (ii) of Lemma~\ref{lem:dangeroussets}, we see that for all $v \in A'$ it holds $\alpha(v,t-1)=\alpha(v,t-2)$. Hence $A' \in \mathcal{D}(B(t-1),t-1)$ and, moreover, $A'=A_0(B(t-1),t-1)$. Then the update rule (U1) is well-defined.

When applying (U1), we define the set $B(t)$. In case $B(t)=B(t-1)$, we have $\mathcal{D}(B(t),t-1)=\mathcal{D}(B(t-1),t-1)$ and $A_0(B(t),t-1)=A_0(B(t-1),t-1)$, so (S3) can be applied. It remains the case where $\mathcal{D}(B(t-1),t-1)$ is non-empty and all vertices $v \in A_0(B(t-1),t-1)=A'$ are full in $G(t-1)$ but there exist still non-full vertices. By Lemma~\ref{lem:dangeroussets}~(iii), for all $A \subseteq V \setminus (B(t-1) \cup A')$ we deduce
\[\alpha(A,t-1) \ge \alpha(A,t-2) \ge \phi(A \cup A',B(t-1),t-1)-\phi(A',B(t-1),t-1).\]
Hence, after setting $B(t) = B(t-1) \cup A_0(B(t-1),t-1)$, for every set $A \subseteq V \setminus B(t)$ it holds $\alpha(A,t-1) \ge \phi(A,B(t),t-1)$. So $\mathcal{D}(B(t),t-1)$ becomes empty and (S2) can be applied. This proves invariant (I1).

For invariant (I2), we only have to consider (S2) and (S3), because (S1) is applied when it already holds $B(t)=V$. Maxi now inserts the edge $e=\{u,v\}$. In case she applies (S2), we already know that $\mathcal{D}(B(t),t-1)$ is empty. Clearly we have $u,v \in V \setminus B(t)$. Observe that for a set $A \subseteq V \setminus B(t)$, $\phi(A,B(t),t)>\phi(A,B(t),t-1)$  is only possible when $u,v \in A$. But then, $\phi(A,B(t),t)=\phi(A,B(t),t-1)+1$, $\alpha(u,t)=\alpha(u,t-1)+1$ and $\alpha(v,t)=\alpha(v,t-1)+1$, and we deduce $A \notin \mathcal{D}(B(t),t)$.

Now suppose Maxi applies rule (S3). We have seen before that whenever $B(t) \neq B(t-1)$ it holds $\mathcal{D}(B(t),t-1)=\emptyset$, hence Maxi only uses (S3) in situations where $B(t)=B(t-1)=B(t-2)$. For all sets $A \notin \mathcal{D}(B(t),t-1)$, by the same arguments as for rule (S2) it follows $A \notin \mathcal{D}(B(t),t)$. So we only have to check the sets $A \in \mathcal{D}(B(t),t-1)$. Whenever Maxi uses rule (S3), we have $t >1$, thus by induction $\mathcal{D}(B(t-2),t-2)=\emptyset$. Let us apply again Lemma~\ref{lem:dangeroussets} with the same parameters as before. By statement (ii) of the lemma, for all $A \notin \mathcal{D}(B(t),t-1)$ we have
\begin{equation}\label{eq:howcritical}
\alpha(A,t-1) \ge \alpha(A,t-2) \ge \phi(A,B(t-2),t-1)-1=\phi(A,B(t-1),t-1)-1.
\end{equation}
Furthermore, recall that the set $A'$ considered in Lemma~\ref{lem:dangeroussets} is the same set as $A_0(B(t),t-1)$, and $\mathcal{C}$ is a superset of $\mathcal{D}(B(t-1),t-1)$. Let $A \in \mathcal{D}(B(t),t-1)$. By definition of (S3), Maxi plays such that at least one vertex of the edge $e$ is contained in $A_0(B(t),t-1) \subseteq A$.  We now distinguish two cases. If both $u,v \in A$, then 
\[\alpha(u,t)+\alpha(v,t)=\alpha(u,t-1)+\alpha(v,t-1)+2.\]
Since $\phi(A,B(t),t)\le \phi(A,B(t-1),t-1)+1$, together with \eqref{eq:howcritical} we deduce that $A \notin \mathcal{D}(B(t),t)$. On the other hand, if $u \in A$ but $v \notin A$, the $\phi$-value of the set $A$ does not increase with the new edge $e$, because $v \notin B(t)$ as every vertex of $B(t)$ is full in $G(t-1)$. At the same time, the $\alpha$-value of $u$ increases by one. Again it follows $A \notin \mathcal{D}(B(t),t)$. All together we see that indeed, $\mathcal{D}(B(t),t)$ is empty.

We proceed with invariant (I3). For the rule (S1) it is obvious that $G(t)$ is $k$-colorable. Otherwise, we observe that since Maxi plays her edge $e$ inside a group $V_i$, we have $e \in E[V \setminus B(t)]$ because every vertex of $B(t)$ was already full in $G(t-1)$. Thus in $G(t)$, at least the subgraph induced by $B(t)$ is $k$-colorable. Now, since $\mathcal{D}(B(t),t)$ is empty by (I2), for every non-empty set $A \subseteq V \setminus B(t)$ we have
\begin{equation}\label{eq:fromDtocoloring}
2|E_t[A]|+|E_t[A,B(t)]| \le 2\phi(A,B(t),t) \le 2\alpha(A,t)  \le 2(m-1) |A|<(2m-1)|A|.
\end{equation}
By Lemma~\ref{lem:coloring} and by $k \ge 2m-1$, we see that $G(t)$ is $k$-colorable and indeed, Maxi is allowed to play the desired edge.

It remains the last invariant (I4). Suppose there exists a vertex $v \in V \setminus B(t)$ which is not full in $G(t)$. Then $v \in V_i$ for some index $i$, and there exists another vertex $u \in V_i$ such that $\{u,v\} \notin E_t$. We now fictitiously assume that Mini plays the edge $f=\{u,v\}$ in her move at time $t+1$ and then verify that $G(t+1)$ would be $k$-colorable, implying in turn that $G(t)$ could not be saturated. 

By (I2) it holds $\mathcal{D}(B(t),t)=\emptyset$. We see that if Mini inserts edge $f$, she in fact applies herself rule (S2)! Using similar arguments as above when analyzing rule (S2), we see that for all set $A \subseteq V \setminus B(t)$ where the $\phi$-value increases by one due to the edge $f$, the $\alpha$-value increases too. Therefore, $\mathcal{D}(B(t+1),t+1)$ is empty, given that Mini plays $f$. Then \eqref{eq:fromDtocoloring} for $t+1$ instead of $t$ and Lemma~\ref{lem:coloring} together imply that $G(t+1)$ would be $k$-colorable. Hence (I4) is also true.

We are now able to finish the proof as follows. Suppose Maxi applies the proposed strategy and consider her turn at some point in time $t$ where she answers to Mini's previous turn at time $t-1$. In order to create the desired collection of cliques, it is sufficient to play such that every vertex becomes full. So assume that after potentially applying (U1), the set $V \setminus B(t)$ contains non-full vertices in $G(t-1)$. Since the strategy is well-defined, Maxi answers by applying either rule (S2) or rule (S3). In both cases, by (I3) she can do so such that $G(t)$ is $k$-colorable. Hence, $G(t-1)$ was \emph{not} saturated and we see that if the game stops directly after a move of Mini, then every vertex is full in $G(t-1)$ which is fine. On the other hand, by invariant (I4) Maxi can make $G(t)$ only saturated if every vertex of $V \setminus B(t)$ is full in $G(t)$. Since all vertices of $B(t)$ are already full, we see that if the game stops after a move of Maxi, again \emph{all} vertices are full in $G(t)$. We see that indeed, in the graph $G_{end}$ every subset $V_i$ induced a clique of size $|V_i|$.
\end{proof}

\begin{proof}[Proof of Lemma~\ref{lem:dangeroussets}.]
Let $t_0$ be a point in time of the game process that satisfies the two preconditions of the statement. Suppose that the set $\mathcal{C}$ is non-empty. We first observe that the empty set is not an element of $\mathcal{C}$ as $\phi(\emptyset,B,t_0+1)=0$. Next, assume that $\mathcal{C}$ is non-empty and let $A_1,A_2 \in \mathcal{C}$. We claim that $A_1 \cap A_2 \in \mathcal{C}$. By assumption, we have $\sum_{v \in A_1}f(v) \ge \phi(A_1,B,t_0)$. $G(t_0)$ is not saturated, so there is a player who inserts a new edge $e=\{x,y\}$ at time $t_0+1$. Since $A_1 \in \mathcal{C}$, we have
\[f(A_1) \le \phi(A_1,B,t_0+1)-1 \le \phi(A_1,B,t_0)\le f(A_1),\]
implying 
\begin{equation}\label{eq:A1becamebad}
f(A_1) = \phi(A_1,B,t_0).
\end{equation}
Obviously, $A_2$ achieves the same property. It follows
\begin{align*}
\phi(A_1,B,t_0) + \phi(A_2,B,t_0)  & =f(A_1)+f(A_2)=f(A_1 \cup A_2)+f(A_1 \cap A_2)\\&\ge \phi(A_1 \cup A_2,B,t_0)+\phi(A_1 \cap A_2,B,t_0)\\&\ge \phi(A_1,B,t_0) + \phi(A_2,B,t_0),
\end{align*}
where the first inequality follows by the first assumption on $G(t_0)$ and second inequality follows from the fact that every edge of the graph $G(t_0)$ is counted in  $\phi(A_1 \cup A_2,B,t_0)+\phi(A_1 \cap A_2,B,t_0)$ at least as often as in $\phi(A_1,B,t_0) + \phi(A_2,B,t_0)$, which can be observed by a simple case analysis. However, the above inequality chain implies that we have equality everywhere, and in particular
\begin{equation}\label{eq:cutA1A2}
f(A_1 \cap A_2)=\phi(A_1 \cap A_2,B,t_0).
\end{equation}

From \eqref{eq:A1becamebad} we know that $\phi(A_1,B,t_0+1)=\phi(A_1,B,t_0)+1$, and the same is true for $A_2$. Then either both vertices $x,y$ of the new edge are contained in $A_1 \cap A_2$, or one endpoint is in $A_1 \cap A_2$ and the other in $B$. We see that in both cases, the new edge $e$ contributes to $\phi(A_1 \cap A_2, B,t_0+1)$. Together with \eqref{eq:cutA1A2} we deduce $A_1 \cap A_2 \in \mathcal{C}$. Now that whole argument can be repeated for any two sets $A_1,A_2 \in \mathcal{C}$, and we conclude that $A' = \cap_{A \in \mathcal{C}} A$ is itself in the family $\mathcal{C}$. This proves (i).

Next we observe that statement (ii) follows directly from the property $\phi(A,B,t_0+1)\le \phi(A,B,t_0)+1$ and from \eqref{eq:A1becamebad}. Regarding (iii), this observation implies $\phi(A',B,t_0)=f(A')$. Let $A \subseteq (B \cup \setminus A')$ and recall that  by assumption we have $f(A \cup A') \ge \phi(A \cup A',B,t_0)$. Putting things together, we then arrive at 
\begin{equation}\label{eq:ineqforii}
\phi(A \cup A',B,t_0)-\phi(A',B,t_0) \le f(A \cup A') - f(A') = f(A).
\end{equation}
However, we observe that
\[\phi(A \cup A',B,t_0)-\phi(A',B,t_0)=|E_{t_0}[A]|+|E_{t_0}[A,C \cup A']|.\]
Since the edge $e$ uses at least one vertex of $A'$, it is neither contained in $E_{t_0+1}[A]$ nor in $E_{t_0+1}[A,C \cup A']$. This implies
\[\phi(A \cup A',B,t_0)-\phi(A',B,t_0)=\phi(A \cup A',B,t_0+1)-\phi(A',B,t_0+1).\]
Combining this equality with inequality~\eqref{eq:ineqforii} then proves (iii).
\end{proof}

\section{Upper Bound}\label{sec:saturationupper}

We prove Theorem~\ref{thm:saturationupper} by describing and analyzing a strategy for Mini which shortens the game such that $G_{end}$ becomes sufficiently sparse. Here, the main idea is to play such that in $G_{end}$ there are many vertices of degree $n-1$ (``star vertices''). Then, the color classes of the complete $k$-partite graph $G_{end}$ are rather unbalanced, making $G_{end}$ sparser. Our general upper bound on the score of colorability saturation games follows from the following lemma.

\begin{lemma}\label{lem:stars}
Let $n,\ell \in \N$ and let $k \ge 3\ell+1$. Then in the colorability saturation game with $k$ colors and $n$ vertices, Mini has a strategy such that there are at least $\ell$ vertices of degree $n-1$ in $G_{end}$.
\end{lemma}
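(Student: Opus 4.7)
The strategy and analysis dualize those of Lemma~\ref{lem:cliques}. Before the game, Mini fixes a set $V_0 = \{v_1, \ldots, v_\ell\} \subseteq V$ of designated vertices and aims to force each $v_i$ to become a star in $G_{end}$. She measures her progress via the potential
\[
  \alpha(v, t) := 2 \cdot |N_{G(t)}(v) \cap V_0|,
\]
so that $\alpha(v, t) \in \{0, 2, \ldots, 2\ell\}$ and $\alpha(A, t) = 2 |E_t[A, V_0]|$ for any $A \subseteq V \setminus V_0$. As in the lower bound proof, she maintains a growing set $B(t) \subseteq V$ together with the dangerous families $\mathcal{D}(B(t), t)$ and $A_0(B(t), t)$ defined via~\eqref{eq:phidef}, and plays according to analogues of the update rule (U1) and the selection rules (S1)--(S3).

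\textbf{Rules of play.} On her turn Mini first updates $B(t)$ by absorbing any saturated dangerous set (one in which every vertex is either a star in $V_0$ or has $\alpha = 2\ell$). She then plays an edge: if $\mathcal{D}(B(t), t-1)$ is non-empty, she connects some non-star $v_i \in V_0 \setminus B(t)$ to a vertex inside $A_0(B(t), t-1)$; otherwise, she plays any edge incident to a non-star $v_i$, possibly an edge inside $V_0$ to turn it into a clique. Edges entirely inside $V_0$ are neutral in the sense that they affect neither $\alpha(A, t)$ nor $\phi(A, V_0, t)$ for $A \subseteq V \setminus V_0$, so they can be played freely without disturbing the invariant.

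\textbf{Key calculation.} The invariant to maintain is $\alpha(A, t) \ge \phi(A, V_0, t)$ for every $A \subseteq V \setminus V_0$, which is equivalent to $|E_t[A, V_0]| \ge |E_t[A]|$. Combined with the trivial bound $|E_t[A, V_0]| \le \ell |A|$, this yields
\[
  2|E_t[A]| + |E_t[A, V_0]| \;\le\; 3|E_t[A, V_0]| \;\le\; 3\ell |A|.
\]
Adding the edge Mini wants to play contributes at most $1$ to the left side, so for $k \ge 3\ell + 1$ the colorability condition $2|E[A]| + |E[A, V_0]| < k|A|$ of Lemma~\ref{lem:coloring} holds for every non-empty $A \subseteq V \setminus V_0$ (the case $|A| = 1$ is handled directly via $|N(u) \cap V_0| \le \ell$). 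Applying Lemma~\ref{lem:coloring} with $B = V_0$---the induced subgraph $G[V_0]$ is trivially $k$-colorable since $\ell \le k$---then certifies that Mini's intended edge preserves $k$-colorability. The same bound applied to $G(t-1)$ shows that $G(t-1)$ cannot be saturated while some $v_i \in V_0$ still has a non-neighbor, so Mini can continue to make progress until every $v_i$ has degree $n-1$.

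\textbf{Main obstacle.} The chief difficulty is verifying the dual analogues of invariants (I1)--(I4): Mini's single edge must simultaneously serve the ``offensive'' goal of progressing toward a star and the ``defensive'' goal of keeping $\mathcal{D}(B(t), t)$ empty against Maxi's potentially adversarial internal edges inside $V \setminus V_0$. Parts (ii) and (iii) of Lemma~\ref{lem:dangeroussets} play the same structural role as in the lower bound, guaranteeing that one carefully chosen edge into $A_0$ restores $\mathcal{D}$ to empty. The new subtlety is the joint endpoint constraint---one endpoint must lie in $V_0 \setminus B(t)$, the other in $A_0(B(t), t-1)$---which must be shown to be simultaneously achievable whenever both rules apply, together with the bookkeeping for how $B(t)$ can grow beyond $V_0$ when non-designated vertices have already reached $\alpha = 2\ell$.
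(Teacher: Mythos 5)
Your overall architecture (potential function, dangerous sets, Lemma~\ref{lem:dangeroussets} to locate $A_0$, Lemma~\ref{lem:coloring} for legality, a growing set $B$, and the arithmetic $3\ell < k$) matches the paper's, and your normalization $\alpha(v,t)=2|N(v)\cap V_0|$ against $\phi(A,B(t),t)$ does yield the bound $2|E_t[A]|+|E_t[A,B(t)]|\le 3|E_t[A,V_0]|\le 3\ell|A|<k|A|$ once one carries the invariant relative to the full set $B(t)$ rather than $V_0$. The genuine gap is your decision to fix $V_0$ \emph{in advance}. The paper explicitly does the opposite: the star set $S(t)$ is built adaptively (rules (U1)/(U3) of Algorithm~\ref{alg:findSandB}), and a vertex is admitted to $S$ only at a moment when it is already adjacent to every current member of $S$. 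This makes $S(t)$ a clique at all times \emph{for free}, which is used twice: it is what makes $G[B(t)\setminus S(t)]$ $(k-\ell)$-colorable (each star occupies its own color class), and it is what guarantees that when the game ends every designated vertex really has degree $n-1$.

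With a fixed $V_0$ you must separately force $V_0$ to become a clique, and your justification --- that edges inside $V_0$ are ``neutral'' and ``can be played freely'' --- does not survive adversarial play. Per round, Mini's defensive edge into $A_0\times V_0$ gains her exactly $+1$ on the deficit of the most dangerous set while Maxi's edge can cost $-1$, so the net progress per round is zero and any deficit Mini ever concedes is never recovered; consequently Mini cannot afford to ``skip'' a turn to play inside $V_0$ whenever Maxi keeps creating a fresh dangerous set (which she can do by playing an edge between two vertices at potential equality). Worse, an edge inside $V_0$ can become permanently illegal: if Maxi drives $\chi(G[V\setminus V_0])$ above $k-\ell$ while Mini is busy completing $E[V\setminus V_0,V_0]$, then two vertices of $V_0$ are forced into a common color class of the complete $k$-partite graph $G_{end}$, neither becomes a star, and the lemma fails. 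Your potential only controls $A\subseteq V\setminus B(t)$ relative to $B(t)$; the step that would cap $\chi(G[B(t)\setminus V_0])$ at $k-\ell$ is exactly the step that needs $V_0$ to already be a clique. So the adaptive choice of the star vertices is not a cosmetic difference but the idea that closes this circularity, and your proposal is missing it.
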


\begin{proof}[Proof of Theorem~\ref{thm:saturationupper}.]
Let $k \ge 4$, $n>k$, and put $\ell := \lfloor \frac{k-1}{3}\rfloor$. No matter how Mini and Maxi play, the graph $G_{end}$ is a complete $k$-partite graph with partition sizes $n_1, \ldots, n_k$, containing $\binom{n}{2}-\sum_{i=1}^k \binom{n_i}{2}$ edges. W.l.o.g.\ assume $n_1 \ge \ldots \ge n_k$. By Lemma~\ref{lem:cliques}, Mini has a strategy such that there are $\ell$ vertices of degree $n-1$ in $G_{end}$, i.e., $n_{k-\ell+1}=\ldots=n_k=1$.

We want to lower-bound the number of missing edges in $G_{end}$. Aside from the $\ell$ color classes of size $1$, there are $k-\ell$ color classes left over among which we have to distribute the $n-\ell$ remaining vertices. Then the number of missing edges becomes minimal if for all $1 \le i \le k-\ell$ it holds $\lfloor \frac{n-\ell}{k-\ell}\rfloor \le n_i \le \lceil \frac{n-\ell}{k-\ell}\rceil$. Indeed, for any other distribution we would have $n_1 > \lceil \frac{n-\ell}{k-\ell}\rceil$ and $n_{k-\ell} < \lfloor \frac{n-\ell}{k-\ell}\rfloor$, and transferring one vertex from the first class to class $k-\ell$ would decrease the total number of missing edges. Hence, it follows that the number of edges that are missing in $G_{end}$ is at least
\[(k-\ell) \binom{\frac{n-\ell}{k-\ell}}{2} = \frac{(n-\ell)(n-k)}{2(k-\ell)} \ge \frac{1}{k-\ell}\binom{n}{2} - \frac{n(k+\ell)}{2(k-\ell)}.\]
Furthermore, $\frac{k}{3} \ge \ell \ge \frac{k-3}{3}$ by our choice of $\ell$, therefore $\frac{1}{k-\ell}  \ge \frac{3}{2k+3}$ and $\frac{k+\ell}{k-\ell} \le 2$. We see that the number of missing edges is at least 
\[\frac{1}{k-\ell}\binom{n}{2} - n \ge \frac{3}{2k+3}\binom{n}{2} - n,\]
which proves the theorem.
\end{proof}

In Section~\ref{sec:saturationlower}, we analyzed a strategy where Maxi creates a collection of disjoint cliques. Even though Mini's strategy is different, we use similar proof techniques to verify Lemma~\ref{lem:stars}. There will be one major difference: while Maxi defined \emph{before} the start of the game which vertex sets should become cliques, the strategy that we propose for Mini is more adaptive in the sense that she does not announce at the start which nodes should be come stars, but she chooses these distinguished vertices carefully at specific moments \emph{during} the game process. In the analysis, we therefore use a set $S(t)$, containing all vertices that have been designated until time $t$ to become star vertices. The set $S(t)$ will be increasing in $t$.

For every vertex $v \in V$, every set $S \subseteq V$, and all $t \in \N_{0}$ we define
\[\alpha(v,S,t) := |S \cap \Gamma_t(v)|\]
for measuring the progress of Mini at vertex $v$. Moreover, for all $A \subseteq V$, $S \subseteq V$, and $t \in \N_0$ let $\alpha(A,S,t) := \sum_{v \in A}\alpha(v,S,t)$. For all choices of $v$ and $S$, we start with $\alpha(v,S,0)=0$, and clearly the $\alpha$-values are increasing as the game evolves.

Similarly as in Section~\ref{sec:saturationlower}, we use the function $\phi(A,B,t)$ as defined in \eqref{eq:phidef} to measure the progress of the opponent, but this time the opponent is Maxi. The plan is to compare the $\alpha$-values with the $\phi$-values, for the purpose of finding a suitable edge for Mini's moves. Again, for all points in time $t \in \N_0$ and all $B,S \subseteq V$ we put
\[\mathcal{D}(B,S,t) := \big\{A \subseteq V \setminus B \mid \alpha(A,S,t) < \phi(A,B \setminus S,t)\big\}.\]
Notice that for all sets $B$ and $S$ we start with $\mathcal{D}(B,S,0)=\emptyset$. Finally, let
\[A_0(B,S,t) := \bigcap_{A \in \mathcal{D}(B,S(t),t)} A.\]

For all $t \in \N_0$ we have to define which sets $B=B(t)$ and $S=S(t)$ we want to use when comparing $\alpha$- and $\phi$-values. We choose $B(t)$ and $S(t)$ in such a way that $S(t) = B(t)$ as long as $|S(t)| < \ell$. As soon as $|S(t)|=\ell$, only $B(t)$ will further grow. We start with $S(0)=B(0)=\emptyset$. Whenever it is Maxi's turn at some point in time $t$, we put $S(t)=S(t-1)$ and $B(t)=B(t-1)$.  Whenever it is Mini's turn at round $t$, we first define the sets $S(t)$ and $B(t)$ and afterwards specify which edge she should play. It turns out that finding suitable sets $S(t)$ and $B(t)$ is quite tricky. Suppose at some point in time $t$ it is Mini's turn. We then advise Mini to run Algorithm~\ref{alg:findSandB} for appropriately defining $S(t)$ and $B(t)$.

\begin{algorithm}[H]
\caption{Algorithm for defining the sets $S(t)$ and $B(t)$}\label{alg:findSandB}
\begin{algorithmic}[1]
 \State $S := S(t-1)$, $B := B(t-1)$
 \If{$\mathcal{D}(B,S,t-1)=\emptyset$} $Z := V \setminus B$
 \Else $Z := A_0(B,S,t-1)$
 \EndIf
 \While{$B \neq V$ \textbf{and} $\alpha(Z,S,t-1) = |Z|\cdot|S|$}
  \If{$Z=V \setminus B$ \textbf{and} $|S|=\ell$} $B := V$
  \ElsIf{$\mathcal{D}(B,S,t-1)=\emptyset$}
   \State let $v \in V \setminus B$
   \State $S:=S \cup \{v\}$, $B:=B \cup \{v\}$ \Comment{Rule (U1)}
  \Else
   \If{$|S|=\ell$}
    \State $B := B \cup A_0(B,S,t-1)$ \Comment{Rule (U2)}
   \Else
    \State let $v \in A_0(B,S,t-1)$
    \State $S := S \cup \{v\}$, $B:=B \cup \{v\}$ \Comment{Rule (U3)}
   \EndIf
  \EndIf
  \If{$\mathcal{D}(B,S,t-1)=\emptyset$} $Z := V \setminus B$
  \Else $Z := A_0(B,S,t-1)$
  \EndIf
 \EndWhile
 \State $S(t) := S$, $B(t) := B$
 \State \textbf{return} $S(t)$, $B(t)$
\end{algorithmic}
\end{algorithm}

The idea behind Algorithm~\ref{alg:findSandB} is the following. In her move, Mini wants to play an edge $\{u,v\}$ such that $u \in V \setminus B(t)$ and $v \in S(t)$. In case $\mathcal{D}(B(t),S(t),t-1)$ is non-empty, we require $u \in A_0=(B(t),S(t),t-1)$. Hence, the set $Z$ used in the algorithm indicates from which set Mini will choose $u$ from. We observe that if all possible edges between $Z$ and $S$ are already present in $G(t-1)$, there is no edge $\{u,v\}$ available for Mini. We overcome this problem by repeatedly updating the sets $S$ and $B$. When applying rule (U1) or rule (U3), a new vertex $v$ is added to $S$. Note that $v$ is already connected to the other vertices of $S$, therefore for all points in time $t$, the vertices of $S(t)$ induce a complete graph in $G(t-1)$. In case $S$ already contains $\ell$ vertices, instead of adding a new vertex to $S$ we apply rule (U2) and transfer some nodes to the set $B$. We see that whenever $S(t) \neq B(t)$, we have $|S(t)|=\ell$ and between $S(t)$ and $B(t) \setminus S(t)$, all edges are present in $G(t-1)$. Note that a priori it is not clear whether the considered set $A_0(B,S,t-1)$ is non-empty and whether the algorithm terminates or not. 

We now continue by describing Mini's strategy for playing her edge at time $t$ that follows immediately after executing Algorithm~\ref{alg:findSandB}.

\begin{enumerate}
\item[(S1)] If $B(t)=V$ but $G(t-1)$ is not yet saturated, insert an arbitrary edge such that $G(t)$ is $k$-colorable.
\item[(S2)] If $\mathcal{D}(B(t),S(t),t-1)$ is empty and $B(t)\neq V$, let $u \in V \setminus B(t)$ and $v \in S(t)$ be two vertices such that the edge $\{u,v\}$ is not yet contained in $G(t-1)$. Insert $\{u,v\}$.
\item[(S3)] If $\mathcal{D}(B(t),S(t),t-1)$ is non-empty, let $u \in A_0(B(t),S(t),t-1)$ and $v \in S(t)$ be two vertices such that the edge $\{u,v\}$ is not yet contained in $G(t-1)$. Insert $\{u,v\}$.
\end{enumerate}

This finishes the definition of Mini's strategy. We now start proving Lemma~\ref{lem:stars}. Thereby, we also verify that Algorithm~\ref{alg:findSandB} terminates and that the proposed strategy is well-defined and covers all possible cases.

\begin{proof}[Proof of Lemma~\ref{lem:stars}.]

Let $n,\ell \in \N$ and let $k \ge 3\ell+1$. Suppose Mini applies the proposed strategy. We prove by induction that as long as $G(t-1)$ is not saturated, for Mini's move at time $t$ the following invariants hold.

\begin{enumerate}
\item[(I1)] Algorithm~\ref{alg:findSandB} always terminates and afterwards, exactly one rule of (S1)-(S3) can be applied.
\item[(I2)] Let Mini insert the desired edge, regardless whether $G(t)$ is $k$-colorable or not. Then the set $\mathcal{D}(B(t),S(t),t)$ is empty.
\item[(I3)] Mini can play her edge without violating the colorability constraint.
\item[(I4)] If there is a vertex $u \in V \setminus B(t)$ with $\alpha(u,S(t),t) < \ell$, then $G(t)$ is not yet saturated.
\end{enumerate}

We start with $B(0)=\emptyset$, $S(0) = \emptyset$, and $\mathcal{D}(B(0),S(0),0)=\emptyset$. Let $t$ be a point in time such that $G(t-1)$ is not saturated and assume that either it is Mini's first move of the game (providing the base case) or that by induction, (I1)-(I4) were true for all previous moves of Mini. We start with (I1). Recall that the set $Z$ used in Algorithm~\ref{alg:findSandB} indicates from which set Mini wants to pick a vertex $u$ and connect with a vertex $v \in S(t)$. With the criterion $\alpha(Z,S,t-1)=|Z|\cdot|S|$ in line 4 we test whether all these edges are already present in the graph $G(t-1)$.

First we assume that the set $\mathcal{D}(B(t-1),S(t-1),t-1)$ is empty. If this is the case, we put $B=B(t-1)$, $S=S(t-1)$, and in line 2 we set $Z = V \setminus B$. In the special case $V = B$ Algorithm~\ref{alg:findSandB} immediately stops and Mini can apply rule (S1). If $B\neq V$, we have three subcases. First, if $\alpha(Z,S,t-1) < |Z| \cdot |S|$, Algorithm~\ref{alg:findSandB} terminates, and rule (S2) can be applied. Next, if $\alpha(Z,S,t-1)=|Z|\cdot\ell$ and we put $B=V$ in line 5 (during the second iteration of the while-loop). Again the algorithm terminates, and Mini uses (S1) until the end of the game. It remains the case $\alpha(Z,S,t-1)=|Z|\cdot |S|$ but $|S|<\ell$. In this situation, when running Algorithm~\ref{alg:findSandB} we apply rule (U1) and add one vertex $v$ to the sets $S$ and $B$. We claim that after applying (U1), the set $\mathcal{D}(B,S,t-1)$ is still empty. Let $A \subseteq V \setminus B$ be any non-empty set (where we take the freshly updated set $B$ including $v$). Since $\mathcal{D}(B(t-1),S(t-1),t-1)$ was empty by assumption, we have 
\[\alpha(A,S(t-1),t-1) \ge \phi(A,B(t-1) \setminus S(t-1),t-1)=|E_{t-1}[A]|.\]
However, when adding $v$ to $S$ and $B$, clearly the $\alpha$-value of $A$ is non-decreasing while the $\phi$-value remains the same, so $A \notin \mathcal{D}(B,S,t-1)$. Hence after using rule (U1), $\mathcal{D}(B,S,t-1)$ is empty as claimed, $Z$ is set to $V \setminus B$, and we can repeat the whole argument for the case, potentially update the sets $S$ and $B$ several times, until either $\alpha(Z,S,t-1) < |Z| \cdot |S|$ or $\alpha(Z,S,t-1)=|Z|\cdot\ell$. Then indeed the algorithm terminates, and as discussed above Mini can apply either (S1) or (S2).

Let us now assume that the set $\mathcal{D}(B(t-1),S(t-1),t-1)$ is non-empty. Then in line 3 we put $Z=A_0(B(t-1),S(t-1),t-1) = \cap_{\mathcal{A} \in \mathcal{D}(B(t-1),S(t-1),t-1)}A$. We have to verify that this set is non-empty. Note that this case can only happen when $t > 1$. By induction we have $\mathcal{D}(B(t-2),S(t-2),t-2)=\emptyset$, so we can apply Lemma~\ref{lem:dangeroussets} with $f(v)=\alpha(v,S(t-2),t-2)$, $B=B(t-2) \setminus S(t-2)$, and $t_0=t-2$. Then the set $\mathcal{C}$ given by \eqref{eq:Doflemma} contains all subsets $A \subseteq V \setminus B(t-2)$ where $\alpha(A,S(t-2),t-2) < \phi(A,B(t-2) \setminus S(t-2),t-1)$. Since $S(t-1)=S(t-2)$, $B(t-1)=B(t-2)$, and $\alpha(v,S(t-1),t-1) \ge \alpha(v,S(t-2),t-2)$ holds for every vertex $v \in V$, it follows
\[\emptyset \neq \mathcal{D}(B(t-1),S(t-1),t-1) \subseteq \mathcal{C}.\]
By Lemma~\ref{lem:dangeroussets}~(i), $A' = \cap_{A \in \mathcal{C}}$ is non-empty and itself a member of $\mathcal{C}$, i.e., $\alpha(A',S(t-2),t-2) < \phi(A',B(t-2) \setminus S(t-2),t-1)$. Together with the assumption $\mathcal{D}(B(t-1),S(t-1),t-1) \neq \emptyset$ and statement (ii) of Lemma~\ref{lem:dangeroussets}, we conclude that for all $v \in A'$ it holds
\[\alpha(v,S(t-1),t-1) = \alpha(v,S(t-1),t-2).\]
Hence $A' \in \mathcal{D}(B(t-1),S(t-1),t-1)$, $A'=A_0(B(t-1),S(t-1),t-1)$, and therefore $A_0(B(t-1),S(t-1),t-1)$ is non-empty.

If at least one edge between $A_0(B(t-1),S(t-1),t-1)$ and $S(t-1)$ is not present in $G(t-1)$, Algorithm~\ref{alg:findSandB} immediately terminates, returns $S(t)=S(t-1)$ resp.\ $B(t)=B(t-1)$, and Mini can play her edge according to rule (S3). If all edges between $A_0(B(t-1),S(t-1),t-1)$ and $S(t-1)$ are already contained in $E_{t-1}$, there are two subcases.

Let us first investigate the subcase $|S(t-1)|=\ell$ where we are going to apply rule (U2) in line 11 and thus put $B=B(t-1) \cup A_0(B(t-1),S(t-1),t-1)$. (Note that we are still in the first iteration of the while-loop, so the ``old'' $B$ is the same as $B(t-1)$. Also note that here, we have $S=S(t-1)$.) We now apply Lemma~\ref{lem:dangeroussets}~(iii) and using $S(t-1)=S(t-2)$ and $B(t-1)=B(t-2)$, we see that for all $A \subseteq V \setminus (B(t-1) \cup A')$ it holds
\begin{align*}
&\alpha(A,S(t-1),t-1) \ge \alpha(A,S(t-2),t-2)\\ &\ge \phi(A \cup A',B(t-1) \setminus S(t-1),t-1)-\phi(A',B(t-1) \setminus S(t-1),t-1).
\end{align*}
Hence, after applying rule (U2), for each such set $A$ we have
\[\alpha(A,S,t-1) \ge \alpha(A,S(t-1),t-1) \ge \phi(A,B \setminus S(t-1),t-1) = \phi(A,B \setminus S,t-1),\]
and $\mathcal{D}(B,S,t-1)$ becomes empty. Consequently we put $Z=V \setminus B$ in line 15. Now either $\alpha(Z,S,t-1) < |Z| \cdot \ell$, Algorithm~\ref{alg:findSandB} terminates, and rule (S2) can be applied, or $\alpha(Z,S,t-1)=|Z| \cdot \ell$, we put $B=V$ in line 5 (during the second iteration of the while-loop), the algorithm terminates, and Mini uses (S1) until the end of the game.

In the subcase $|S(t-1)| < \ell$, we apply rule (U3), pick one vertex $v$ of the set $A_0(B(t-1),S(t-1),t-1)$, designate it as future ``star-vertex'' and add it to the set $S$. We claim that after executing rule (U3) in line 14, the set $\mathcal{D}(B,S,t-1)$ is empty. Indeed, by construction all sets $A \in \mathcal{D}(B(t-1),S(t-1),t-1)$ contain $v$, thus for every set $A \subseteq V \setminus (B(t-1) \cup \{v\})$ we have
\[\alpha(A,S(t-1),t-1) \ge \phi(A,B(t-1) \setminus S(t-1),t-1).\]
However, here it holds $B(t-1)=S(t-1)$ and $B=S$, thus
\[\phi(A,B \setminus S,t-1) = \phi(A,B(t-1) \setminus S(t-1),t-1).\]
It follows that $A \notin \mathcal{D}(B,S,t-1)$ as the $\alpha$-value of $A$ is non-decreasing. Hence $\mathcal{D}(B,S,t-1)$ is empty as claimed, and we consequently set $Z=V \setminus B$ in line 15. Now we are in the same situation as discussed above: either $\alpha(Z,S,t-1) < |Z| \cdot |S|$, or $\alpha(Z,S,t-1)=|Z| \cdot \ell$, or $\alpha(Z,S,t-1)=|Z| \cdot |S|$ but $|S| < \ell$. We have already seen that in all three subcases, at some point the algorithm stops and afterwards, either (S1) or (S2) serves as a matching rule. This proves invariant (I1).

We continue with the second invariant (I2). Suppose Mini plays the edge $e=\{u,v\}$. If Mini applied rule (S1), then $B(t)=V$ and the invariant is trivial. If Mini uses (S2), then $\mathcal{D}(B(t),S(t),t-1)$ is empty. W.l.o.g.\ we assume $u \in V \setminus B(t)$ and $v \in S(t)$. We observe that for no set $A \subseteq V \setminus B(t)$ its $\phi$-value can increase due to Mini's edge, so $\mathcal{D}(B(t),S(t),t)$ is empty as well.

It remains to check rule (S3). As we have seen before when analyzing Algorithm~\ref{alg:findSandB}, whenever Mini applies rule (S3) it holds $S(t)=S(t-1)=S(t-2)$ and $B(t)=B(t-1)=B(t-2)$, because all update rules (U1)-(U3) are set up such that the set $\mathcal{D}(B(t),S(t),t-1)$ becomes empty. Similarly as for rule (S2) we see that each set $A \subseteq V \setminus B(t)$ that is not contained in $\mathcal{D}(B(t-1),S(t-1),t-1)$ is also not contained in $\mathcal{D}(B(t),S(t),t)$. Notice that whenever (S3) is used we have $t >1$ and thus by induction $\mathcal{D}(B(t-2),S(t-2),t-2)=\emptyset$. We apply Lemma~\ref{lem:dangeroussets} with the same parameters as before and argue that for every set $A \subseteq V \setminus B(t-1)$ we have
\begin{align*}
\alpha(A,S(t-1),t-1) &\ge \alpha(A,S(t-2),t-2) \ge \phi(A,B(t-2) \setminus S(t-2),t-1)-1 \\ & = \phi(A,B(t-1) \setminus S(t-1),t-1)-1.
\end{align*}
Recall that the set $A'$ given by Lemma~\ref{lem:dangeroussets} is the same set as $A_0(B(t),S(t),t-1)$ and $\mathcal{D}(B(t-1),S(t-1),t-1) \subseteq \mathcal{C}$. Assume w.l.o.g.\ that Mini plays edge $e=\{u,v\}$ where $u \in A_0(B(t),S(t),t-1)$ and $v \in S(t)$. Then for all $A \in \mathcal{D}(B(t-1),S(t-1),t-1)$ we have $u \in A$. We see that with the new edge $e$, the $\phi$-value of $A$ does not increase whereas its $\alpha$-value increases by one, implying that $A \notin \mathcal{D}(B(t),S(t),t)$ as required. We deduce that $\mathcal{D}(B(t),S(t),t)$ must be empty, which veryfies (I2).

Regarding (I3), we first detect that for rule (S1) it is obvious that $G(t)$ is $k$-colorable. For rules (S2) and (S3), we argue that since $|S(t)|\le \ell$ by construction, it is sufficient to prove that the subgraph of $G(t)$ induced by $V \setminus S(t)$ is $(k-\ell)$-colorable. Recall that whenever $S(t) \neq B(t)$, it holds $|S(t)|=\ell$ in $G(t-1)$, every vertex of $B(t) \setminus S(t)$ is connected with each star vertex of $S(t)$, and the vertices of $S(t)$ induce a complete graph in $G(t-1)$. From these facts we infer that since the subgraph of $G(t-1)$ induced by $B(t)$ is $k$-colorable, the subgraph of $G(t-1)$ induced by $B(t) \setminus S(t)$ must be $(k-\ell)$-colorable. In the other case $S(t)=B(t)$, this is trivial. Further, we know that when Mini plays the edge $e=\{u,v\}$, both nodes $u$ and $v$ are contained in the set $(V \setminus B(t)) \cup S(t)$. Hence, in $G(t)$ the subgraph induced by $B(t) \setminus S(t)$ is $k-l$-colorable too. we now want to extend this property from $B(t) \setminus S(t)$ to the set $V \setminus S(t)$. By (I2), the set $\mathcal{D}(B(t),S(t),t)$ is empty, so for every non-empty set $A \subseteq V \setminus B(t)$ it holds
\begin{equation}\label{eq:kminuslcolor}
2 |E_t[A]|+|E_t[A,B(t)\setminus S(t)]| \le 2\phi(A,B(t) \setminus S(t),t) \le 2\alpha(A,S(t),t) \le 2\ell|A| < (k-\ell)|A|.
\end{equation}
Using Lemma~\ref{lem:coloring} we conclude that in $G(t)$, the subgraph induced by $V \setminus S(t)$ is $(k-\ell)$-colorable and thus $G(t)$ itself must be $k$-colorable.

We turn to the last invariant (I4) and assume that there exists $u \in V \setminus B(t)$ with $\alpha(u,S(t),t) < \ell$. Above when verifying invariant (I3) we have realized that the subgraph of $G(t)$ induced by $V \setminus S(t)$ is $(k-\ell)$-colorable. Hence if $S(t)$ contains less than $\ell$ vertices, it is obvious that $G(t)$ can not be saturated. So suppose $|S(t)|=\ell$. Then there exists a vertex $v \in S(t)$ such that $\{u,v\} \notin E_t$. We fictitiously assume that Maxi inserts the edge $f=\{u,v\}$ in her turn at time $t+1$. By invariant (I2) it holds $\mathcal{D}(B(t),S(t),t) = \emptyset$. Therefore Maxi playing edge $f$ corresponds to applying rule (S2) herself. Using similar arguments as above when analyzing rule (S2), we see that for no set $A$ the $\phi$-value increases due to edge $f$. Then $\mathcal{D}(B(t+1),S(t+1),t+1)$ is empty, given that Mini plays $f$. Then from \eqref{eq:kminuslcolor} for $t+1$ instead of $t$ and Lemma~\ref{lem:coloring} we deduce that $G(t+1)$ would be $k$-colorable. Therefore, $G(t)$ cannot be saturated and (I4) is true as well.

After verifying all four invariants by induction, we can now finish the proof of the lemma. Suppose Mini follows our strategy during the whole game and consider her turn at some point in time $t$. Her goal is to ensure that $G_{end}$ contains $\ell$ vertices of degree $n-1$. Suppose that after running Algorithm~\ref{alg:findSandB}, there exists at least one vertex $u \in V \setminus B(t)$ with $\alpha(u,S(t),t-1) < \ell$. As the strategy is well-defined, Mini answers by applying rule (S2) or rule (S3), and by (I3) indeed she can do so without violating the colorability constraint. Therefore, $G(t-1)$ was not saturated under our assumptions. However, by (I1) Algorithm~\ref{alg:findSandB} always terminates and one rule always applies. If $G(t-1)$ is saturated, then this must be rule (S1), in turn implying that $V=B(t)$. But then, all edges between $S(t)$ and $V \setminus S(t)$ are present in $G(t-1)$ and $|S(t)|=\ell$. Because the vertices of $S(t)$ induce a complete graph, we see that the vertices of $S(t)$ all have degree $n-1$ in $G(t-1)$.

On the other hand, if Mini makes $G(t)$ saturated with her edge played at time $t$, by (I4) we have $\alpha(V \setminus B(t),S(t),t)=|V \setminus B(t)| \cdot \ell$, i.e., $S(t)$ contains $\ell$ nodes and for all $u \in V \setminus B(t)$ it holds $\alpha(u,S(t),t)=\ell$. Then again the desired structure is present in $G(t)$. We see that no matter which player terminates the game, there are at least $\ell$ vertices of degree $n-1$ in $G_{end}$.
\end{proof}

\section{The Four Color Game}\label{sec:saturation4color}

In this section we study one specific saturation game by considering the special case $k=4$. By Theorem~\ref{thm:saturationlower} and Theorem~\ref{thm:saturationupper} we already have the bounds
\begin{equation}\label{eq:4colorsold}
\frac{n^2}{4} \le s(n,\chi_{>4}) \le \frac{n^2}{3}(1+o(1)).
\end{equation}
In particular, the upper bound follows because Mini has a strategy such that $G_{end}$ contains a vertex $s$ of degree $n-1$ (see Lemma~\ref{lem:stars}). The goal of this section is to prove Theorem~\ref{thm:4color} and thus close the gap in \eqref{eq:4colorsold}. We improve the lower bound by providing an alternative strategy for the specific game with $k=4$ which is more effective than the general strategy proposed in Section~\ref{sec:saturationlower}. In principle, the idea is the same as before: building up a collection of vertex-disjoint cliques ensures that no color class becomes too large. But in contrast to our previous strategy, we advise Maxi to proceed ``greedily'' and play the cliques within successive moves such that they cover the vertices that have been used most recently by Mini. Depending on Mini's strategy, Maxi answers by drawing cliques of size 4, 3, or 2 (that is, single edges).

After this informal description, let us proceed by introducing notations. On the one hand, our strategy for Maxi defines which edges she should play in her turns. On the other hand, it also describes how the collection of vertex-disjoint cliques grows during the process. At several points in time $t_i$, after the edge of round $t_i$ has been played Maxi defines a vertex set $V_i \subseteq V$ which then yields the $i$-th clique of the desired collection. More precisely, we require that the vertices of $V_i$ induce a complete graph in $G(t_i)$ and that all sets $V_i$ are vertex-disjoint. We then denote $t_i$ as the \emph{birth time} of the $i$-th clique. The sequence $t_i$ is non-decreasing, but subsequent cliques are allowed to have the same birth time. It turns out that it is convenient to also require that all birth times $t_i$ are chosen such that \emph{Mini} plays at time $t_i$. During the game process, Maxi only marks a finite number of cliques, and this number depends on Mini's strategy. Consequently, $V_i$ and $t_i$ are only defined for indices $i$ where the $i$-th clique actually exists. Finally, for each clique $V_i$ we put
\[W(i) := V \setminus \big(\bigcap_{j \le i} V_i\big).\]

Similarly as in Section~\ref{sec:saturationlower} and Section~\ref{sec:saturationupper}, we use the function $\phi$ as defined in \eqref{eq:phidef} to measure the progress of the opponent. For all $i \in \N$ where the $i$-th clique exists, we put
\[\phi(i) := \phi(W(i),V \setminus W(i),t_i) = |E_{t_i}[W(i)]|+|E_{t_i}[W(i),V \setminus W(i)]|.\]
One of Maxi's goals is to play such that the values $\phi(i)$ are globally bounded by a small constant, which means that we are always in the situation that her collection of vertex-disjoint cliques covers almost all non-isolated vertices of the current graph. 

We start by describing under which conditions it is reasonable for Maxi to create a triangle as next clique.

\begin{lemma}\label{lem:K3}
Let $i \in \N$ such that $\phi(i) \le 3$, $|W(i)| \ge 4$, and at most one edge of $E_{t_i}[W(i)]$ is isolated in $G(t_i)$. Then Maxi has a strategy for playing her edges and defining $V_{i+1}$ and $t_{i+1}$ such that
\begin{itemize}
\item[(i)] $t_{i+1} \le t_i+6$,
\item[(ii)] $|V_{i+1}|=3$, and
\item[(iii)] $\phi(i+1) \le 3$.
\end{itemize}
\end{lemma}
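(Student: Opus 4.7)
The plan is to have Maxi designate three vertices $u,v,w \in W(i)$ whose triangle will become $V_{i+1}$, playing at most three edges to complete the missing sides while Mini plays in between. I would split the argument by the structure of $E_{t_i}[W(i)]$: (a) if it already contains a path of length two (or a triangle) on three vertices, take those as $\{u,v,w\}$ and close the triangle in a single move, so $t_{i+1} = t_i + 2$; (b) if it contains a non-isolated edge $\{u,v\}$ but no $P_3$, Maxi extends it by connecting $v$ to a carefully chosen $w \in W(i)$ in her first move and closes the triangle in her second, giving $t_{i+1} \le t_i + 4$; (c) otherwise $E_{t_i}[W(i)]$ contains at most one (necessarily isolated) edge, and Maxi builds the triangle from scratch in her three subsequent moves, so $t_{i+1} \le t_i + 6$. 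The hypothesis that at most one edge of $E_{t_i}[W(i)]$ is isolated guarantees that whenever two or more edges live inside $W(i)$, at least one is non-isolated and hence sits on a $P_3$ useful as a seed, which is precisely what makes cases (a) and (b) clean.

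The heart of the argument is the accounting for $\phi(i+1)$. Once $V_{i+1}=\{u,v,w\}$ is declared, every edge counted in $\phi(i)$ that is internal to $\{u,v,w\}$ drops out of $\phi$, every edge with exactly one endpoint in $\{u,v,w\}$ becomes a cross edge between $W(i+1)$ and $V \setminus W(i+1)$ and is still counted, and edges disjoint from $\{u,v,w\}$ remain internal to $W(i+1)$. Mini's moves between $t_i$ and $t_{i+1}$ each contribute at most $1$ to $\phi$, unless Maxi can absorb the responsible endpoint into $\{u,v,w\}$. In cases (a) and (b) Mini plays only one or two moves in the window, so the initial budget $\phi(i) \le 3$ plus a small slack suffices. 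In case (c) Maxi has three moves but so does Mini, and here Maxi exploits the fact that she can postpone naming $w$: after her seed move she waits, and each time Mini plays a fresh edge incident to a previously-untouched vertex of $W(i)$, Maxi reassigns $w$ to that vertex, forcing Mini's edge to be absorbed into $V_{i+1}$ or, at worst, to turn into a cross edge rather than a new interior edge of $W(i+1)$.

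The main obstacle will be case (c) under adversarial play: if $\phi(i)=3$ and Mini maximally scatters her three edges across $W(i)$, the raw $\phi$ count could approach $6$ before the triangle is born. To control this I will case-split on where each of Mini's edges lands (incident to $\{u,v\}$, incident to an untouched vertex of $W(i)$, or wholly outside $W(i)$) and match each of the worst kinds with an absorbing choice of $w$, using that in this case the original interior edge of $W(i)$, if any, is isolated and therefore either also absorbable or entirely outside the chosen triangle at a cost of $1$. A uniform bookkeeping bound of $\phi(i+1) \le 3$ then emerges from tallying absorbed edges against new $\phi$-contributions. Feasibility of Maxi's moves with respect to $4$-colorability is separate but straightforward: each move adds an edge inside the forming clique $V_{i+1}$, and applying Lemma~\ref{lem:coloring} with $B$ equal to the union of existing disjoint cliques $V_1,\ldots,V_i$ together with the already-present part of $V_{i+1}$ shows that the degree criterion $2|E[A]|+|E[A,B]| < 4|A|$ holds on $V \setminus B$, since the only edges touching $W$ come from $\phi$ and are bounded by $3$.
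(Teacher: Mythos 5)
Your case split and overall bookkeeping framework match the paper's proof, but the one idea that actually makes the bound $\phi(i+1)\le 3$ work is missing: the triangle's vertices must be chosen by a \emph{maximal-degree} rule so that they absorb the edges already counted in $\phi(i)$. In the paper, when $E_{t_i}[W(i)]$ is empty Maxi first picks $x,y\in W(i)$ maximizing $\deg(x)+\deg(y)$ in $G(t_i)$, and only after Mini's reply picks $z$ of maximal degree in $G(t_i+2)$; this guarantees that of the (up to) three old cross edges plus Mini's first edge, at most one survives into $\phi(i+1)$, and the remaining budget of $3$ is then consumed by Mini's moves at $t_i+4$ and $t_i+6$. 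Your substitute mechanism --- ``reassign $w$ to whichever untouched vertex Mini just hit'' --- cannot do this job: $w$ can be reassigned only until Maxi plays her second triangle edge, so it absorbs at most one of Mini's three moves, and it does nothing about the up to three pre-existing cross edges, which must be absorbed by the \emph{seed pair} $\{u,v\}$. Concretely, if $E_{t_i}[W(i)]=\emptyset$, the three old cross edges sit at three distinct vertices $a,b,c$, you seed on two fresh vertices, and Mini plays away from $\{a,b,c\}$, then all three old edges plus Mini's last two moves survive and $\phi(i+1)\ge 5$. The same issue appears in your case (b), where ``carefully chosen $w$'' is doing all the work but is never specified; and in your case (c) you build ``from scratch'' even when an isolated edge is available, which both wastes a tempo and leaves that edge inside $W(i+1)$ at a cost of $1$ you cannot afford.

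Two smaller points. First, an edge of $E_{t_i}[W(i)]$ that is non-isolated in $G(t_i)$ need not lie on a $P_3$ \emph{inside} $W(i)$ --- its extra neighbour may belong to an earlier clique --- so your cases (a) and (b) do not separate as cleanly as claimed, although this mainly affects the timing rather than correctness. Second, your feasibility argument applies Lemma~\ref{lem:coloring} with $B$ containing the partially built $V_{i+1}$, but that lemma requires $G[B]$ \emph{including the new edge} to be $k$-colorable, which is precisely what needs proving. The paper instead argues directly: the forming triangle's vertices have degree at most $4$ and $3$ respectively, the subgraph induced on the complement of these two low-degree vertices is $4$-colorable because it is an induced subgraph of a legal position (Maxi's own edges in the window all touch the removed vertices), and the colouring then extends to them greedily.
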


\begin{proof}
We prove the lemma by a case distinction. First suppose that in $G(t_i)$, the vertex set $W(i)$ already induces a triangle with vertices $x,y,z$. Then obviously we can put $V_{i+1} := \{x,y,z\}$ and $t_{i+1}:=t_i$. In particular, it then holds $\phi(i+1)=0$. 

Next assume that in $G(t-1)$ there exists a path $\{x,y,z\}$ where all three vertices are contained in $W(i)$. Then Maxi can play the edge $\{x,z\}$ at time $t_i+1$. After Mini played at time $t_i+2$, we put $V_{i+1}:=\{x,y,z\}$ and $t_{i+1}:=t_i+2$. We also have $\phi(i+1) \le 2$ since the two edges $\{x,y\}$ and $\{y,z\}$ that were contributing to $\phi(i)$ are not counted for $\phi(i+1)$.

Now we investigate the case where in $G(t_i)$, the graph induced by $W(i)$ is non-empty but neither contains a triangle nor a path of 3 vertices. Let $e=\{x,y\}$ be an arbitrary edge of $E_{t_i}[W(i)]$, and let $z \in W(i) \setminus \{x,y\}$ with maximal degree in $G(t_i)$. We then advise Maxi to complete the triangle $\{x,y,z\}$ within her next two moves at times $t_i+1$ and $t_i+3$. Note that in $G(t+3)$, there will be at most three edges between $V_{i+1} := \{x,y,z\}$ and $V \setminus V_{i+1}$: by testing all possible cases, we see that at most two edges of this type were already present in $G(t_i)$, and we have to add the edge that Mini eventually played at time $t_i+2$. Then it is easy to see that $G(t+3)$ is indeed $4$-colorable. Furthermore, $G(t+3)$ is not saturated due to the assumption $|W(i)| \ge 4$. So we put $t_{i+1}:=t_i+4$. Regarding property (iii), we observe that due to our choice of $z$, the assumption on $\phi(i)$ and the assumption that $E_{t_i}[W(i)]$ contains at most one isolated edge in $G(t_i)$, at most one edge that was counted in $\phi(i)$ can now contribute to $\phi(i+1)$. Adding Mini's edges at times $t_i+2$ and $t_i+4$ we arrive at $\phi(i+1) \le 3$. Note that here, we assumed that Mini does not help us in creating the triangle on the vertices $\{x,y,z\}$. In case Mini would play one of these edges on her own, it is easy to see that we obtain the same invariants already by picking $t_{i+1}:= t_i+2$.

Finally, we consider the remaining case where $E_{t_i}[W(i)]$ is empty. We pick $x,y \in W(i)$ such that $\deg(x)+\deg(y)$ is maximal in the graph $G(t_i)$. We first advise Maxi to play the edge $\{x,y\}$ at time $t_i+1$. After Mini's subsequent answer, at time $t_i+3$ Maxi picks $z \in W(i) \setminus \{x,y\}$ with maximal degree in $G(t_i+2)$. Then in her next two moves at times $t_i+3$ and $t_i+5$, Maxi completes the triangle $V_{i+1} := \{x,y,z\}$. Again, we assume for simplicity that Mini does not help Maxi in creating this triangle and plays different edges. We first have to verify that the proposed strategy is possible without violating the colorability constraint. Assume w.l.o.g.\ that in $G(t_i+5)$ we have $\deg(x) \ge \deg(y) \ge \deg(z)$. We observe that in this graph there can be at most five edges between $V_{i+1}$ and $V \setminus V_{i+1}$: at most three that were counted with $\phi(i)$, and at most two additional edges played by Mini at times $t_i+2$ and $t_i+4$. Therefore in $G(t_i+5)$ we have $\deg(y) \le 4$ and $\deg(z) \le 3$. The subgraph of $G(t_i+5)$ induced by $V \setminus \{y,z\}$ must be $4$-colorable because Maxi is not playing in this subgraph and Mini is also forced to maintain the colorability property. Since $|\Gamma_{t_i+5}(y)| \le 4$, $|\Gamma_{t_i+5}(z)| \le 3$, and $\{y,z\} \in E_{t_i+5}$, we can take an arbitrary coloring of $V \setminus \{y,z\}$ and extend it to the entire vertex set. Due to the assumption $|W(i)| \ge 4$, we also infer that $G(t_i+5)$ is not yet saturated. So we can take $t_{i+1} := t_i+6$. It remains to bound $\phi(i+1)$. Let 
\[E' := E_{t_i+2}[W(i)] \cup E_{t_i+2}[W(i),V \setminus W(i)].\]
The assumption $\phi(i) \le 3$ implies $|E'| \le 5$, but the edge $\{x,y\}$ is clearly not counted in $\phi(i+1)$. Because we are in the case $E_{t_i}[W(i)]=\emptyset$, the strategy now ensures that at most one edge of the set $E'$ can contribute to $\phi(i+1)$. At last, we take into account the two edges that Mini plays at times $t_i+4$ and $t_i+6$ and conclude that invariant (iii) is indeed satisfied.
\end{proof}

Next, we give a similar lemma for the special situation where Maxi is able to create a clique of size four during her next four moves.

\begin{lemma}\label{lem:K4}
Let $i \in \N$ such that $|W(i)| \ge 6$ and there exist at least two isolated edges $e,e' \in E_{t_i}[W(i)]$ that are isolated in $G(t_i)$. Then Maxi has a strategy for playing her edges and defining $V_{i+1}$ and $t_{i+1}$ such that
\begin{itemize}
\item[(i)] $t_{i+1} \le t_i+8$,
\item[(ii)] $|V_{i+1}|=4$, and
\item[(iii)] $\phi(i+1) \le \phi(i)+2$.
\end{itemize}
\end{lemma}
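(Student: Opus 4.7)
The plan is to let Maxi designate the four endpoints $a,b,c,d$ of the two given isolated edges as her next clique $V_{i+1}$, and to have her fill in the four missing $K_4$-edges $\{a,c\},\{a,d\},\{b,c\},\{b,d\}$ during her next four turns at times $t_i+1, t_i+3, t_i+5, t_i+7$. At each of these turns she picks any one of the four that is not yet present in the graph; if Mini happens to play one of them herself, Maxi simply skips it and moves on to the next missing edge. Taking $t_{i+1}$ to be the first Mini-turn at which $V_{i+1}$ induces $K_4$, we immediately get $t_{i+1}\le t_i+8$ and $|V_{i+1}|=4$, establishing (i) and (ii).

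The main thing to verify is that every edge Maxi wants to play actually keeps $G$ $4$-colorable; non-saturation of $G(t-1)$ just before Maxi's turn then follows automatically, because as long as $V_{i+1}$ does not yet induce $K_4$, one of the four target edges is both missing and (by the same argument) legal. I would apply Lemma~\ref{lem:coloring} with $B:=V\setminus V_{i+1}$. The subgraph $G[B]$ is $4$-colorable as a subgraph of the current (always $4$-colorable) game graph, and Maxi's move does not touch $B$. The quantitative input is that $a,b,c,d$ have degree exactly one in $G(t_i)$, so every edge in $E[V_{i+1},B]$ must have been played by Mini after time $t_i$. Before Maxi's $j$-th move ($j\in\{1,2,3,4\}$) Mini has played at most $j-1\le 3$ edges in total, hence $|E[V_{i+1},B]|\le 3$, while $|E[A]|\le\binom{|A|}{2}$ for every $A\subseteq V_{i+1}$. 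For every non-empty such $A$ this gives
\[
2|E[A]|+|E[A,B]| \;\le\; |A|(|A|-1)+3 \;<\; 4|A|,
\]
with the tightest case $|A|=4$ immediately after Maxi's fourth move reading $2\cdot 6+3=15<16$. Lemma~\ref{lem:coloring} then guarantees $4$-colorability of the graph after Maxi's move.

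For (iii), I would observe that in $G(t_i)$ the only edges incident to $V_{i+1}$ are the two isolated edges themselves, since $a,b,c,d$ have no further neighbors. Both contribute to $\phi(i)$ but, once $V_{i+1}$ is absorbed into $V\setminus W(i+1)$, they stop contributing to $\phi(i+1)$, accounting for a loss of exactly $2$. Maxi's own four edges lie inside $V_{i+1}$ and contribute $0$ to $\phi(i+1)$, while each of the at most four Mini edges played in $(t_i,t_{i+1}]$ contributes at most $1$ (only when at least one endpoint lies in $W(i+1)$). Combining these yields $\phi(i+1)\le\phi(i)-2+4=\phi(i)+2$, which is (iii).

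The main obstacle is really the $4$-colorability check. A priori one might worry that Mini could connect some external vertex $x$ to all of $\{a,b,c,d\}$ and obstruct the completion of $K_4$; however, since Mini has strictly fewer moves than Maxi up to Maxi's fourth turn, she cannot assemble such a $K_5$-structure in time, and the degree bookkeeping above, fed into Lemma~\ref{lem:coloring}, makes this precise and uniform over all subsets $A\subseteq V_{i+1}$.
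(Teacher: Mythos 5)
Your proposal is correct and follows essentially the same route as the paper: complete the $K_4$ on the four endpoints of the two isolated edges over Maxi's next four turns, and charge the change in $\phi$ to the loss of $e,e'$ against the at most four edges Mini plays in the meantime. The only difference is that you make explicit, via Lemma~\ref{lem:coloring} with $B=V\setminus V_{i+1}$ and the bound $|E[V_{i+1},B]|\le 3$, the colorability check that the paper dismisses as ``not difficult to check,'' which is a welcome elaboration rather than a deviation.
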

\begin{proof}
Suppose that the described situation occurs at time $t_i$. Then Maxi takes the two edges $e=\{x,y\}$ and $e'=\{x',y'\}$ and during her next four moves, she claims the four edges that remain such that the vertex set $V_{i+1} := \{x,x',y,y'\}$ induces a $K_4$. Since Mini can insert at most three additional edges meanwhile, it is not difficult to check that Mini has no possibility to forbid any of these four edges, so Maxi indeed succeeds in creating this $K_4$ until time $t_i+7$. Because we are assuming that $|W(i)|\ge 6$ and $e,e'$ have been isolated in $G(t_i)$, the graph $G(t_i+7)$ is not yet saturated. We then put $t_{i+1} := t_i+8$. For property (iii), we infer that $e$ and $e'$ contributed to $\phi(i)$ but don't contribute to $\phi(i+1)$. On the other hand, there are at most four edges that Mini played in meantime, therefore $\phi(i+1) - \phi(i) \le 2$. Noe that we assumed again that Mini does not help Maxi in creating the $K_4$. Otherwise, it is not difficult to see that (i)-(iii) can be already achieved with a smaller choice of $t_{i+1}$. 
\end{proof}

Finally, in some situations it is the best for Maxi to only play cliques of size $2$, i.e., simple edges. The purpose of such a strategy is to reduce the $\phi$-value and ensure that it is globally bounded during the game process.

\begin{lemma}\label{lem:K2}
Let $i \in \N$ such that $\phi(i) \le 5$ and $|W(i)| \ge 3$. Then Maxi has a strategy for playing her edges and defining $V_{i+1}$ and $t_{i+1}$ such that
\begin{itemize}
\item[(i)] $t_{i+1} \le t_i+2$,
\item[(ii)] $|V_{i+1}|=2$, and
\item[(iii)] $\phi(i+1) \le \max\{\phi(i)-1,1\}$.
\end{itemize}
\end{lemma}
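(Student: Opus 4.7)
I would split into two cases based on whether $G(t_i)[W(i)]$ contains an edge, and in both cases rely on the following bookkeeping: if $V_{i+1}:=\{u,v\}$ for some $u,v\in W(i)$, then $W(i+1)=W(i)\setminus\{u,v\}$, and a direct inspection of the four edge classes of $G(t_i)$ shows that any edge with one endpoint in $\{u,v\}$ and the other in $W(i)\setminus\{u,v\}$ merely switches from an internal edge of $W(i)$ to a cut edge for $W(i+1)$, contributing once to both $\phi(i)$ and $\phi(i+1)$; the only edges counted in $\phi(i)$ that drop out of $\phi(i+1)$ are the potential edge $\{u,v\}$ itself and the edges from $\{u,v\}$ into $V\setminus W(i)$. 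Hence
\[
\phi(i+1)\;=\;\phi(i)\;-\;L\;+\;M,\qquad L:=|E_{t_i}[\{u,v\},V\setminus W(i)]|+\mathds{1}[\{u,v\}\in E_{t_i}],
\]
where $M$ counts the contribution to $\phi(i+1)$ of the edges played strictly between $t_i$ and $t_{i+1}$.

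If $E_{t_i}[W(i)]\neq\emptyset$, then Maxi picks any existing edge $\{u,v\}\in E_{t_i}[W(i)]$, defines $V_{i+1}:=\{u,v\}$, and sets $t_{i+1}:=t_i$; no edge is played in between, so $M=0$ and the indicator term gives $L\ge 1$, yielding $\phi(i+1)\le\phi(i)-1\le\max\{\phi(i)-1,1\}$. Here no colorability check is needed, since Maxi plays nothing.

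If $E_{t_i}[W(i)]=\emptyset$, then all of $\phi(i)$ lies in the cut. Writing $d(x):=|E_{t_i}[\{x\},V\setminus W(i)]|$ for $x\in W(i)$, we have $\sum_{x\in W(i)}d(x)=\phi(i)$. Maxi now plays a carefully chosen edge $\{u,v\}\subseteq W(i)$ at time $t_i+1$ and sets $t_{i+1}:=t_i+2$, so $L=d(u)+d(v)$ and $M\le 1$. For $\phi(i)=0$ any pair works and $\phi(i+1)\le 1$. For $\phi(i)=1$ Maxi takes $u$ to be the unique vertex with $d(u)=1$ and any $v\in W(i)\setminus\{u\}$, giving $L\ge 1$ and $\phi(i+1)\le 1$. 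For $\phi(i)\ge 2$ a short pigeonhole argument on $(d(x))_{x\in W(i)}$ delivers a pair with $d(u)+d(v)\ge 2$---either some vertex has $d\ge 2$ and is paired with anything, or at least two distinct vertices have $d\ge 1$---and hence $\phi(i+1)\le\phi(i)-1$. The hypothesis $|W(i)|\ge 3$ ensures a valid partner for $u$ always exists.

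The most delicate step will be verifying that in the second case Maxi's new edge $\{u,v\}$ preserves $4$-colorability of $G(t_i)$. Because the previous cliques $V_1,\ldots,V_i$ are vertex-disjoint and each of size at most $4$, because $E_{t_i}[W(i)]=\emptyset$ in this case, and because the external degree of $\{u,v\}$ into $V\setminus W(i)$ is bounded by $\phi(i)\le 5$, the neighborhood of $\{u,v\}$ is sparse enough to invoke \lemref{coloring} on a suitable subgraph or to recolor $u,v$ directly to distinct colors; this is in the same spirit as the colorability checks implicit in \lemref{K3} and \lemref{K4}. Colorability of $G(t_{i+1})$ after Mini's reply is automatic, since Mini must also play a legal edge.
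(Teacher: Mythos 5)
Your proposal is correct and follows essentially the same route as the paper: the same split on whether $E_{t_i}[W(i)]$ is empty, the same choice of $\{u,v\}$ (an existing edge with $t_{i+1}=t_i$, or a pair covering as many cut edges as possible played at $t_i+1$), and the same accounting for (iii). The only step you leave as a sketch is the $4$-colorability check in the second case; the paper closes it with exactly the bound you already have, namely $d(u)+d(v)\le\phi(i)\le 5$ forces $\min\{\deg(u),\deg(v)\}\le 2$, so a proper $4$-coloring of the graph minus the lower-degree endpoint (a subgraph of $G(t_i)$) extends greedily to that vertex.
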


\begin{proof}
First suppose that the edge set $E_{t_i}[W(i)]$ is non-empty. Then we can take an arbitrary edge $e=\{x,y\}$ of this edge set, put $V_{i+1} := \{x,y\}$ and $t_{i+1}=t_i$. Clearly, we then have $|E_{t_{i+1}}[W(i+1)]| < |E_{t_i}[W(i)]|$, which shows (iii) in this case.

If the set $E_{t_i}[W(i)]$ is empty, we take $x,y \in W(i)$ such that $\deg(x)+\deg(y)$ is maximal in $G(t_i)$, and tell Maxi to play the edge $e=\{x,y\}$ at time $t_i+1$. Assume w.l.o.g.\ that $\deg(x) \ge \deg(y)$. Then $\deg(y) \le 2$ by our assumption on $\phi(i)$. After inserting the edge $e$, clearly the subgraph of $G(t_i+1)$ induced by $V \setminus \{y\}$ is $4$-colorable, and since $\deg(y) \le 3$ in $G(t_i+1)$, every proper coloring of $V \setminus \{y\}$ can be easily extended to $y$. Since $|W(i)|\ge 3$ and  $E_{t_i}[W(i)]=\emptyset$, the strategy also ensures that $G(t_i+1)$ is not saturated. So we can put $V_{i+1} := \{x,y\}$ and $t_{i+1} := t_i+2$. It remains to prove (iii). We observe that if $\phi(i) \le 2$, the edge $e$ covers all edges that contributed to $\phi(i)$, hence the only edge that potentially counts for $\phi(i+1)$ is the edge that Mini plays at time $t_i+2$, thus $\phi(i+1) \le 1$. On the other hand, if $\phi(i) \ge 2$, the edge $e$ covers at least two edges that counted for $\phi(i)$, and we obtain $\phi(i+1) = \phi(i)-1$.
\end{proof}

After these preparations we can start proving Theorem~\ref{thm:4color}. Recall that we only have to provide a matching lower bound on $s(n,\chi_{>4})$.

\begin{proof}[Proof of Theorem~\ref{thm:4color}.]
Let $n \ge 5$. The proof strategy is to verify by induction that given the game process for the first $i$-cliques, we can apply one of the three auxiliary lemmas of this section to see that there exists a strategy for Maxi to create the next clique sufficiently fast. For the base case, we distinguish two cases regarding the identity of the starting player. If Maxi starts the game, she can draw an arbitrary triangle with vertex set $V_1$ during her first three turns, and we put $t_1 := 6$. No matter how Mini plays, we have $\phi(1) \le 3$. If Mini starts with the game, Maxi extend the first edge played by Mini to a triangle with vertex set $V_1$ during her first two moves. We then put $t_1 := 5$ and observe that $\phi(1) \le 2$. In addition, we observe here, the assumption $n \ge 5$ implies that $G(t_1)$ is not saturated as the smallest saturated graph requires at least seven edges.

We now claim that whenever Maxi has defined the $i$-th clique for some $i \in \N$ and $|W(i)| \ge 5$, then she also has a strategy to build the next clique such that one of the following four conditions is satisfied.
\begin{enumerate}
\item[(i)] $\phi(i+1) \le 3$ and $|V_{i+1}|=3$,
\item[(ii)] $\phi(i+1) \le 5$ and $|V_{i+1}|=4$,
\item[(iii)] $\phi(i+1) \le 4$ and $|V_{i}|=4$, or
\item[(iv)] $\phi(i+1) \le 3$ and $|V_{i-1}|=4$.
\end{enumerate}

We prove this claim by induction over $i$, so we assume by induction that $V_i$ and $\phi(i)$ satisfied one of the four conditions. Recall that $\phi(1)\le 3$ and $|V_1|=3$, so indeed $V_1$ and $\phi(1)$ serve as base case. The induction step now follows directly from Lemma~\ref{lem:K3}, Lemma~\ref{lem:K4}, and Lemma~\ref{lem:K2}. Let $i\ge 1$ and first suppose that $V_i$ and $t_i$ satisfied (i) or (iv). Then we distinguish two cases. If $E_{t_i}[W(i)]$ contains at least two edges which are isolated in $G(t_i)$, by Lemma~\ref{lem:K4} Maxi has a fast strategy to create the next clique such that $|V_{i+1}|=4$ and $\phi(i+1) \le 5$. Then $V_{i+1}$ and $\phi(i+1)$ fulfill (ii). In the other case where we don't have this pair of isolated edges, by Lemma~\ref{lem:K3} Maxi can play such that $|V_{i+1}|=3$, $\phi(i+1) \le 3$, and (i) is satisfied. Next suppose $V_{i}$ and $\phi(i)$ satisfy (ii). Here we let Maxi play such that the next clique is only a single edge (i.e.\ $|V_{i+1}|=2$). By Lemma~\ref{lem:K2}, Maxi is able to do so such that (iii) is true. Finally, in the case (iii), by assumption it holds $|V_{i-2}|=4$. Again, we require Maxi to play such that $|V_i|=2$, and then Lemma~\ref{lem:K2} establishes (iv).

We see that as long as $|W(i)| \ge 5$, the game does not stop and Maxi has a strategy to create at least one additional clique with vertex set $V_{i+1}$. Let $j$ be the unique index where $|W(j)| < 5$ and the procedure stops. At this point in time, we have $n' := \sum_{i=1}^{j} |V_i| > n-5$. For the remainder of the game, we let Maxi play arbitrarily until the graph is saturated and the game ends. We now prove that with the given strategy, Maxi ensures that $G_{end}$ contains at least $\frac{n^2}{3}+O(n)$ edges.

At time $t_j$, we have $V \setminus W(j) =\dot\cup_{i=1}^j V_i$. For $k \in \{2,3,4\}$, we denote by $a_k$ the number of sets $V_i$ in this collection with size $k$. By definition we have $a_2+a_3+a_4=j$. Moreover it holds
\begin{equation}\label{eq:coversum}
2a_2+3a_3+4a_4=n'>n-5.
\end{equation}
Next we observe that for every clique where $|V_i|=2$, $V_i$ and $\phi(i)$ either satisfy (iii) or (iv). Hence either $|V_{i-1}|=4$ or $|V_{i-2}|=4$, and we deduce
\[a_2 \le 2a_4.\]

No matter how Mini and Maxi play, $G_{end}$ is a complete $4$-partite graph and up to permutations, there exists exactly one proper $4$-coloring. Denote by $C_1, \ldots, C_4$ the four color classes. Clearly every $K_4$ of the collection contributes one vertex to each class. Next, every triangle of the collections spends one vertex to three different color classes. Let $m := |E_{t_{end}}|$ be the total number of edges in $G_{end}$. We observe that $m$ is minimal if the triangles always contribute to the same three color classes, say $C_1$, $C_2$, and $C_3$. Indeed, otherwise we could move one vertex from a smaller class to a higher class and forbid more edges. Finally, the same argument yields that from Maxi's perspective, in the worst case all $K_2$ of the collection account to the two heaviest color classes, say $C_1$ and $C_2$. Finally, we assume that the remaining $n-n'$ vertices that are not covered by the cliques all account for $C_1$. We summarize that $m$ becomes minimal if we have $|C_4|=a_4$, $|C_3|=a_3+a_4$, $|C_2|=a_2+a_3+a_4$, and $|C_1|=|C_2|+n-n'$. Using $n-n'=O(1)$, we obtain 
\begin{equation}\label{eq:edgesinGend}
m \ge |C_2|^2 + 2 \cdot |C_2| \cdot (|C_3|+|C_4|)+ |C_3| \cdot |C_4| + O(n).
\end{equation}

Minimizing this number subject to the boundary conditions on $a_2$, $a_3$, and $a_4$ is a standard optimization problem. For every fixed value $a_3$, the number of edges will be minimized when $a_2$ is maximal with respect to $a_4$ because this makes the color classes of $G_{end}$ as unbalanced as possible. Therefore, in the extremal case we have $a_2=2a_4$, which eliminates one variable. From \eqref{eq:edgesinGend} we then deduce 
\[m \ge 3a_3^2 + 17a_3a_4 + 22a_4^2 + O(n).\]
On the other hand, from \eqref{eq:coversum} it follows
\[a_2=\frac{n'-3a_3}{4}\quad\text{and}\quad a_4=\frac{n'-3a_3}{8},\]
and combining the last two (in-)equalities yields
\[m \ge \frac{1}{32}\left(11n'^2+2a_3n'-9a_3^2\right)+O(n).\]

The term $11n'^2+2a_3n'-9a_3^2$ is concave in $a_3$ and obtains its minimum at the boundary, i.e., when $a_3 \in \{0,n'/3\}$. Let us shortly compare the two cases. If $a_3=0$, we have $m \ge 11n^2/32+O(n)$, whereas $a_3=n'/3$ gives $m \ge n^2/3+O(n)$. We see that with the choice $a_3=n'/3$, the game stops earlier in the worst-case, and we argue that $a_3=n'/3$ minimizes the value of $m$, up to error term $O(n)$. We conclude that $m \ge n^2/3+O(n)$ and indeed, the score of this particular saturation game is $n^2/3+O(n)$.
\end{proof}

\begin{remark} \label{rem:4colors} Using the suggested strategies for Mini and Maxi we determined $s(n,\chi_{>4})=n^2/3+O(n)$. However, having the analysis on hand we can precisely describe the game process when both players follow an optimal strategy. During a first period of the game, by applying Lemma~\ref{lem:stars} Mini always uses the same vertex $s$ for her edges until $\deg(s)=n-1$. Hence $s$ becomes a star vertex. Meanwhile, Maxi uses the strategy provided with Lemma~\ref{lem:K3} and covers the leafs of this star greedily with a collection of triangles. The pace of both players is equal: both insert in total $n+O(1)$ edges for completing their tasks. This first phase ends at the moment where $\deg(s)=n-1$ and thus $s$ reserves one color class on its own. On the other hand, at the same time the vertex-disjoint triangles guarantee that the three other color classes are equally large. The score of the game is then determined, and the players spend the remaining time by filling the graph arbitrarily with edges until it becomes saturated and the game ends.
\end{remark}

\section{Concluding Remarks}\label{sec:saturationconcluding}

We described strategies for both Maxi and Mini that work for all parameters $k$ and turned out to be almost optimal and sufficiently strong for proving Theorem~\ref{thm:saturationlower} and Theorem~\ref{thm:saturationupper}. In Section~\ref{sec:saturation4color} we have seen that at least in the case $k=4$ it is possible to improve and refine Maxi's strategy such that the lower and upper bounds are matching. We think that Maxi's strategy can be further improved and that the bound given by Theorem~\ref{thm:saturationlower} is not optimal, but that it requires more advanced strategies to improve the lower bound. We believe that in general, it is challenging to determine the score of the colorability saturation game \emph{precisely}.

As discussed in the introduction, very little is known about the saturation game with respect to the property ``$G$ contains a copy of $H$'' where $H$ is a fixed subgraph, even for the choice $H=K_3$. One natural and very interesting example is the Hamiltonian saturation game where we pick $H=C_n$. It is conjectured that the score of the Hamiltonian game is $\Theta(n^2)$ \cite{hefetz2016saturation}. We hope that in near future, the understanding of this fascinating class of combinatorial games can be improved and some of the aforementioned specific games can be solved.

\bibliographystyle{plain}
\bibliography{refsnew}

\begin{thebibliography}{1}

\bibitem{biro2016upper}
Csaba Bir\'{o}, Paul Horn, and Jacob Wildstrom.
\newblock An upper bound on the extremal version of hajnal's triangle-free
  game.
\newblock {\em Discrete Applied Mathematics}, 198:20--28, 2016.

\bibitem{carraher2017game}
James~M. Carraher, William~B. Kinnersley, Benjamin Reiniger, and Douglas~B.
  West.
\newblock The game saturation number of a graph.
\newblock {\em Journal of Graph Theory}, 85(2):481--495, 2017.

\bibitem{erdos1964problem}
Paul Erd\H{o}s, Andr{\'a}s Hajnal, and John~W. Moon.
\newblock A problem in graph theory.
\newblock {\em American Mathematical Monthly}, 71(10):1107--1110, 1964.

\bibitem{faudree2011survey}
Jill~R. Faudree, Ralph~J. Faudree, and John~R. Schmitt.
\newblock A survey of minimum saturated graphs.
\newblock {\em The Electronic Journal of Combinatorics}, DS:no.\ 19, 2011.

\bibitem{fueredi1991hajnal}
Zolt\'an F\"{u}redi, Dave Reimer, and \'Akos Seress.
\newblock Hajnal's triangle-free game and extremal graph problems.
\newblock {\em Congressus Numerantium}, page no.\ 123, 1991.

\bibitem{hefetz2016saturation}
Dan Hefetz, Michael Krivelevich, Alon Naor, and Milo{\v{s}} Stojakovi{\'c}.
\newblock On saturation games.
\newblock {\em European Jorunal of Combinatorics}, 51(C):315--335, 2016.

\bibitem{lee2014new}
Jonathan~D. Lee and Ago~E. Riet.
\newblock New f-saturation games on directed graphs.
\newblock {\em Preprint available at arxiv:1409.0565}, 2014.

\bibitem{lee2015fsaturation}
Jonathan~D. Lee and Ago~E. Riet.
\newblock F-saturation games.
\newblock {\em Discrete Mathematics}, 338:2356--2362, 2015.

\bibitem{seress1992hajnal}
\'Akos Seress.
\newblock On hajnal's triangle-free game.
\newblock {\em Graphs and Combinatorics}, 8(1):75--79, 1992.

\end{thebibliography}
\end{document}